\numberwithin{equation}{section}
\newcommand{\beq}{\begin{equation}}
\newcommand{\eeq}{\end{equation}}
\newcommand{\beqs}{\begin{eqnarray*}}
\newcommand{\eeqs}{\end{eqnarray*}}
\newcommand{\beqn}{\begin{eqnarray}}
\newcommand{\eeqn}{\end{eqnarray}}
\newcommand{\beqa}{\begin{array}}
\newcommand{\eeqa}{\end{array}}
\newtheorem{prop}{Proposition}[section]
\newtheorem{thm}[prop]{Theorem}
\newtheorem{lem}[prop]{Lemma}
\newtheorem{cor}[prop]{Corollary}
\newtheorem{rem}[prop]{Remark}
\renewcommand{\div}{\mbox{div}\,}
\renewcommand{\det}{\operatorname{det}}
\title{Interior H\"older regularity of the linearized Monge-Amp\`ere equation}
\author{Ling Wang}
\address{School of Mathematical Sciences, Peking
University, Beijing 100871, China.}
\email{lingwang@stu.pku.edu.cn; lwmath@foxmail.com}
\thanks {This research is partially supported by  National Key R$\&$D Program of China SQ2020YFA0712800, 2023YFA009900 and NSFC  Grant 11822101.}
\begin{document}

\subjclass{35B65, 35J96, 35J75, 35J70.}

\keywords{interior H\"older regularity, linearized Monge-Amp\`ere equations, De Giorgi-Nash-Moser's theory, partial Legendre transforms, Moser-Trudinger inequality}

\begin{abstract}
    In this paper, we investigate the interior H\"older regularity of solutions to the linearized Monge-Amp\`ere equation. 
    In particular, we focus on the cases with singular right-hand side, which arise from the study of the semigeostrophic equation and singular Abreu equations. In the two-dimensional case, we give a new proof of the Caffarelli-Guti\'errez H\"older estimate (\textit{Amer. J. Math.} \textbf{119} (1997), no.\,2, 423-465) and the result of Le (\textit{Comm. Math. Phys.} \textbf{360} (2018), no.\,1, 271-305) for the  linearized Monge-Amp\`ere equation with singular right-hand side term in divergence form. The main new ingredient in the proof contains the application of the partial Legendre transform to the linearized Monge-Amp\`ere equation. Building on this idea, we also establish a new Moser-Trudinger type inequality in dimension two. In higher dimensions, we derive the interior H\"older estimate under certain integrability assumptions on the coefficients using De Giorgi's iteration.
  \end{abstract}

\maketitle

\section{Introduction}
\vskip 12pt
In this paper, we investigate the interior H\"older regularity for solutions to the inhomogeneous linearized Monge-Amp\`ere equation
\begin{equation}\label{eq:s-LMA-div}
    \sum_{i,j=1}^nD_j\left(\Phi^{ij}D_{i}u\right)=\div F+f
\end{equation}
in a bounded convex domain $\Omega\subset\mathbb R^n$ $(n\geq 2)$, where $\Phi=\left(\Phi^{ij}\right)$ is the cofactor matrix of the Hessian matrix of a convex function $\phi\in C^2(\Omega)$, $F:\Omega\to\mathbb R^n$ is a vector field, and $f:\Omega\to\mathbb R$ is a function. Since $\Phi$ is divergence free, i.e. $\displaystyle \sum_{j=1}^nD_j\Phi^{ij}=0$, for all $i=1,2,\cdots,n$, we can rewrite \eqref{eq:s-LMA-div} in the non-divergence form as follows:
\begin{equation}\label{eq:s-LMA}
    \sum_{i,j=1}^n\Phi^{ij}D_{ij}u=\div F + f.
\end{equation}

When the equation is uniformly elliptic, it is well known that the Harnack and H\"older estimates of the solution are established in the classical De Giorgi-Nash-Moser theory for equations of divergence form  \cite{De,Na,Mo}, and
in the Krylov-Safonov theory for general equations of non-divergence form \cite{KS}. 
The main interest on equation \eqref{eq:s-LMA-div} or \eqref{eq:s-LMA} lies in the lack of uniform ellipticity.
In a celebrated work,  Caffarelli-Guti\'errez obtained the Harnack inequality and the H\"older estimate for equation \eqref{eq:s-LMA-div} or \eqref{eq:s-LMA} with $F\equiv 0$ and $f\equiv 0$, under the $\mathcal{A}_\infty$ condition  \cite{CG}. In particular, this condition is satisfied if \begin{equation}\label{eq:det-bd}
    0<\lambda\leq\det  D^2\phi\leq\Lambda\quad \text{in }\Omega.
\end{equation}
The H\"older estimate for the inhomogeneous equation ($F=0$ and $f\neq 0$), as well as higher order estimates and the boundary regularity, were later established by \cite{TrW, GN1,GN2,LN1,LS}  under certain assumptions on $f$. For further extensions and related work, one can refer to \cite{Le1,Le5,LN2,KLWZ} and the references therein.

When $F\neq 0$, the equation arises from the study of semigeostrophic equations \cite{ACDF,Le,Lo} and singular Abreu equations \cite{KLWZ,Le4,LZ} in the study of convex functionals with a convexity constraint related to the Rochet-Chon\'e model for the monopolist problem in economics. So far, very little is known about the regularity of  \eqref{eq:s-LMA-div} when $F\neq 0$. We focus mainly on the linearized Monge-Amp\`ere equation under the condition \eqref{eq:det-bd}.
Loeper \cite{Lo} obtained  the interior H\"older regularity for \eqref{eq:s-LMA-div} under the stronger assumption that $\det D^2\phi$ is sufficiently close to a positive constant, using the $W^{2,p}$ estimate of the Monge-Amp\`ere equation and a result derived in \cite{Tr}. Later, Le \cite{Le} showed the same result when $n=2$, only assuming \eqref{eq:det-bd}. The main ingredient used in \cite{Le} is the $W^{2,1+\varepsilon}$-estimate of the Monge-Amp\`ere equation established by \cite{DFS,Sc}. Under certain integral bounds on the Hessian $D^2\phi$, Le \cite{LeBook} also extended the H\"older estimates to higher dimensions. More recently, Kim \cite{Kim} derived similar estimates for \eqref{eq:s-LMA-div} with drift terms under a similar condition as \cite{LeBook}. For the estimates to boundary case, see \cite{Le3}.

The results in \cite{Kim,Le,LeBook,Lo} used both the De Giorgi-Nash-Moser iteration and the Caffarelli-Guti\'errez estimate, corresponding to the divergence form and the non-divergence form of linearized Monge-Amp\`ere equations, respectively. Note that it has been pointed out that in general it is impossible to obtain the Caffarelli-Guti\'errez estimate by the De Giorgi-Nash-Moser iteration \cite[Remark 3.4]{TW}, which means that the celebrated theory of Caffarelli-Guti\'errez's is essential in their arguments. However, in this paper we find that there is a new proof of the theorem below without using the Caffarelli-Guti\'errez estimate in dimension two. 

\begin{thm}\label{thm:main}
    Assume $n=2$. Let $\phi\in C^2(\Omega)$ be a convex function satisfying \eqref{eq:det-bd}. Let $F:=(F^1(x),F^2(x)):\Omega\to\mathbb R^2$ be a bounded vector field and $f\in L^r(\Omega)$ for $r>1$. Given  $\Omega'\subset\subset\Omega$ and $p\in(0,+\infty)$, then for every solution $u$ to \eqref{eq:s-LMA-div}
    in $\Omega$, there is
    \begin{equation*}
        \|u\|_{C^{\gamma}(\Omega')}\leq C\left(\|u\|_{L^p(\Omega)}+\|F\|_{L^\infty\left(\Omega\right)}+\|f\|_{L^r(\Omega)}\right),
    \end{equation*}
    where constant $\gamma>0$ depending only on $\lambda$ and $\Lambda$, and constant $C>0$ depending only on $p$, $r$, $\lambda$, $\Lambda$, and $\operatorname{dist}(\Omega',\partial\Omega)$.
\end{thm}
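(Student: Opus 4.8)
The plan is to exploit the two-dimensional structure via the partial Legendre transform, which is the paper's announced new ingredient. Recall that for a convex $\phi\in C^2(\Omega)$ with $\Omega\subset\mathbb R^2$, fixing $x_1$ and Legendre-transforming in $x_2$, i.e. setting $y_1=x_1$, $y_2=D_2\phi(x_1,x_2)$, produces a map $P$ whose image $\Omega^*$ is convex in the $y_2$-direction, and under which the operator $\sum_{i,j}\Phi^{ij}D_{ij}$ transforms in a controlled way. The key algebraic fact (going back to the classical partial Legendre transform for Monge-Amp\`ere) is that the bound \eqref{eq:det-bd} becomes, in the $y$ variables, a \emph{uniformly elliptic} divergence-structure operator of the form $D_{y_1}(a\,D_{y_1}v)+D_{y_2}(a^{-1}D_{y_2}v)+\text{lower order}$, where $a=\phi_{22}$ composed with $P^{-1}$ satisfies $\lambda\le a\le$ (controlled) — more precisely one gets bounds on $a$ from below and a.e.\ finiteness, with $\det D^2\phi=\phi_{11}\phi_{22}-\phi_{12}^2$ controlling the cross terms. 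The strategy is: (i) write down precisely how \eqref{eq:s-LMA-div} transforms under $P$, keeping the right-hand side in divergence form (the transform is built to respect divergence structure); (ii) observe that the transformed equation, while not uniformly elliptic in the classical sense because $a$ may be unbounded above, has a \emph{degenerate-elliptic} structure where the degeneracy is one-sided and integrable; (iii) run a De Giorgi--Nash--Moser iteration adapted to this structure, using the Moser--Trudinger type inequality established earlier in the paper (announced in the abstract) to absorb the borderline integrability of the coefficient $a$ and of the transformed right-hand side.

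In more detail, the steps I would carry out in order are as follows. First, set up the partial Legendre transform and record the transformation rule: if $u$ solves $\sum \Phi^{ij}D_{ij}u=\operatorname{div}F+f$, then $v:=u\circ P^{-1}$ solves an equation of the form $D_1(\phi_{22}D_1 v)+D_2(\phi_{22}^{-1}D_2 v) = \operatorname{div}_y\tilde F+\tilde f$ on $\Omega^*$, where $\tilde F,\tilde f$ are explicit expressions in $F,f$ and the Jacobian of $P$; here one must check that $\|\tilde F\|$ and $\|\tilde f\|$ are controlled in the appropriate (possibly weighted) Lebesgue norms by $\|F\|_{L^\infty}$, $\|f\|_{L^r}$, and the $W^{2,1+\varepsilon}$ bounds on $\phi$ coming from Caffarelli's regularity theory under \eqref{eq:det-bd}. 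Second, establish the De Giorgi oscillation-decay lemma for the transformed operator: for $w$ a subsolution of $D_1(aD_1w)+D_2(a^{-1}D_2w)\le \operatorname{div}\tilde F+\tilde f$ with $a\ge\lambda$ and $\int a^{1+\varepsilon}<\infty$ locally, one has local boundedness and a decay of oscillation on dyadic balls; this is where the weighted Sobolev / Moser--Trudinger inequality enters, replacing the usual Sobolev inequality in the Caccioppoli-plus-iteration scheme. Third, iterate the oscillation decay to get $v\in C^{\gamma}$ on $\Omega^{*\prime}$, then transform back: since $P$ and $P^{-1}$ are H\"older continuous (with exponent controlled by $\lambda,\Lambda$, again via $W^{2,1+\varepsilon}$ estimates for $\phi$), $u=v\circ P$ inherits an interior H\"older bound, with the $\|u\|_{L^p(\Omega)}$ term on the right produced in the standard way by an interpolation/covering argument converting the $L^\infty$ bound from the iteration into an $L^p$ bound.

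I expect the main obstacle to be controlling the degeneracy of the transformed operator, i.e. the fact that the coefficient $a=\phi_{22}$ is only in $L^{1+\varepsilon}_{\rm loc}$ rather than $L^\infty$, so the transformed equation is genuinely not uniformly elliptic and the classical De Giorgi machinery does not apply off the shelf. The resolution must be the new two-dimensional Moser--Trudinger inequality: in dimension two the borderline Sobolev embedding $W^{1,2}\hookrightarrow$ exponential Orlicz space lets one trade an $L^{1+\varepsilon}$ weight against the exponential integrability of $W^{1,2}$ functions, so that the nonlinear terms in De Giorgi's iteration still close. Making this trade quantitative — obtaining a genuine geometric decay of the truncated energy with constants depending only on $\lambda,\Lambda$ — is the technical heart of the argument. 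A secondary difficulty is that $\Omega^*$ is convex only in the $y_2$ direction, not fully convex, so the covering and localization arguments (and the passage between balls in $x$ and regions in $y$) require the shape/section estimates for solutions of the Monge--Amp\`ere equation under \eqref{eq:det-bd} rather than naive Euclidean balls; handling the geometry of sections along the Legendre map is routine but must be done carefully to keep all constants dimensionally honest.
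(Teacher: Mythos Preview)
Your overall architecture---partial Legendre transform, then De Giorgi--Nash--Moser on the transformed equation, then pull back via the $C^{1,\alpha}$ regularity of $\phi$---is exactly the paper's route. But you have miscomputed the transformed equation, and this leads you to manufacture a difficulty that is not there and to misidentify where the $W^{2,1+\varepsilon}$ estimate is needed.

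Concretely: after the partial Legendre transform in (say) the $x_1$-variable, with $\xi=\phi_{x_1}$, $\eta=x_2$, the principal part is \emph{not} $D_1(\phi_{22}\,D_1v)+D_2(\phi_{22}^{-1}D_2v)$ with an unbounded weight $\phi_{22}$ (which, incidentally, is not bounded below by $\lambda$ either). A direct calculation---the paper's Proposition~\ref{prop:new-eq}---gives the principal part
\[
\Big(-\tfrac{\phi^\star_{\eta\eta}}{\phi^\star_{\xi\xi}}\,\widetilde u_\xi\Big)_{\!\xi}+\widetilde u_{\eta\eta},
\]
and the algebraic point of the construction is that $-\phi^\star_{\eta\eta}/\phi^\star_{\xi\xi}=\det D^2\phi$, which by \eqref{eq:det-bd} lies in $[\lambda,\Lambda]$. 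Hence the transformed equation is \emph{genuinely uniformly elliptic}, and the classical De Giorgi--Nash--Moser theorem (e.g.\ \cite[Theorem~8.24]{GT}) applies directly; no degenerate iteration is required. The Moser--Trudinger inequality in the paper is a separate, standalone result (Theorem~\ref{thm:MT-in}) and plays no role in the proof of Theorem~\ref{thm:main}.

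The $W^{2,1+\varepsilon}$ estimate enters not in the ellipticity constants but in the \emph{right-hand side}: the transformed divergence data involve $\phi^\star_{\xi\xi}$ and $\phi^\star_{\xi\eta}$ (for instance $\widetilde F^2\phi^\star_{\xi\xi}$ and $\widetilde F^2\phi^\star_{\xi\eta}$), and for De Giorgi--Nash--Moser in dimension two one needs these in $L^q$ for some $q>2$. The $W^{2,1+\varepsilon_0}$ bound on $\phi$ converts, via the change of variables, into $\phi^\star_{\xi\xi},\phi^\star_{\xi\eta}\in L^{2+\varepsilon_0}$ locally, which is exactly the required integrability. So the only ``singularity'' lives on the right-hand side and is handled by known Monge--Amp\`ere regularity, not by a weighted Orlicz-space iteration.
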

%\begin{rem}
%    When $F\equiv 0$ and $f\equiv 0$, we partially recover the H\"older estimate of linearized Monge-Amp\`ere equations initially established by Caffarelli and Guti\'errez \cite{CG} in dimension two.
%\end{rem}

Note that although we assume $\phi\in C^2(\Omega)$, the derived estimates are independent of the smoothness of $\phi$ and depend only on the structure constants.
Theorem \ref{thm:main} includes \cite[Theorem 1.3]{Le} and Caffarelli-Guti\'errez's estimate \cite{CG} in dimension two. Our main new idea in Theorem \ref{thm:main} is the use of the partial Legendre transform.
After the partial Legendre transform, \eqref{eq:s-LMA-div} becomes a linear uniformly elliptic equation in divergence form with singular right-hand side (see \eqref{eq:plt-lma}). Thus, the De Giorgi-Nash-Moser theory implies that the solution after transformation is H\"older continuous. Then transforming back to the original solution gives us the result. We still need the $W^{2,1+\varepsilon}$-estimate of the Monge-Amp\`ere equation to guarantee that the condition in De Giorgi-Nash-Moser's theory is satisfied.
The partial Legendre transform has been widely used in the study of the Monge-Amp\`ere equation \cite{DS, Fi, GP, Liu}, and it has also been used recently to study the Monge-Amp\`ere type fourth order equation \cite{LZ,WZ}.  However, we didn't find its use in the linearized Monge-Amp\`ere equation. Our proof can be seen as an attempt in this direction.

On the other hand, due to the divergence form of the equation \eqref{eq:s-LMA-div}, we already know that some interesting Sobolev inequalities of Monge-Amp\`ere type  were obtained by \cite{TW} in dimension $n\geq 3$ and \cite{Le} in dimension $n=2$ (see \cite{Ma} for some extensions and \cite{WZ23} for a complex version). Since we will use the Monge-Amp\`ere type Sobolev inequality later, we restate it here.
\begin{thm}[{\cite[Theorem 1.1]{TW}, \cite[Proposition 2.6]{Le}}]\label{lem:Sob}
Assume $n\geq 2$. Let $\phi\in C^2(\Omega)$ be a convex function satisfying 
\eqref{eq:det-bd}. Then there exists a constant $C_{Sob}>0$, depending only on $n$, $\lambda$, $\Lambda$,  and $\Omega$ such that
$$\left(\int_{\Omega}|u|^{2^*}\mathrm{d}x\right)^{\frac{1}{2^*}}\leq C_{Sob}\left(\int_{\Omega}\Phi^{ij}D_iuD_ju\mathrm{~d}x\right)^{\frac{1}{2}},\quad\forall\,u\in C_0^\infty(\Omega),$$
where $2^*=\frac{2n}{n-2}$ for $n\geq 3$, and  any $2^*> 2$ for $n=2$.
\end{thm}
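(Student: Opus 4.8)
The plan is to prove the Monge--Amp\`ere Sobolev inequality of Theorem~\ref{lem:Sob} following the two separate routes indicated by the two source references, treating $n\ge 3$ and $n=2$ differently. For $n\ge 3$ the natural strategy is the one of \cite{TW}: combine the divergence structure of the operator with the ordinary Euclidean Sobolev inequality. Given $u\in C_0^\infty(\Omega)$, one wants to bound $\|u\|_{L^{2^*}}$ by the energy $\mathcal E[u]:=\int_\Omega \Phi^{ij}D_iu\,D_ju\,\mathrm dx$. The key algebraic fact is that the cofactor matrix satisfies $\det\Phi = (\det D^2\phi)^{n-1}$, so under \eqref{eq:det-bd} we have $\lambda^{n-1}\le\det\Phi\le\Lambda^{n-1}$; moreover $\Phi$ is positive definite. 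Writing $\Phi = (\det\Phi)^{1/n}\,\widetilde\Phi$ with $\det\widetilde\Phi = 1$, one would like to exploit that $\int_\Omega \widetilde\Phi^{ij}D_iu\,D_ju\,\mathrm dx$ controls, after a change of variables adapted to $\widetilde\Phi$ on each small ball, the Euclidean Dirichlet energy with a constant that is scale-invariant precisely because $\det\widetilde\Phi=1$. The cleanest implementation is via the affine-invariant Sobolev inequality: there is a universal constant so that $\|u\|_{L^{2^*}}\le C(n)\bigl(\inf_{A\in SL(n)}\int_\Omega |A\nabla u|^2\,\mathrm dx\bigr)^{1/2}$ is \emph{not} quite what we need, but the pointwise bound $|\nabla u|^2 \le \mathrm{tr}(\widetilde\Phi^{-1})\,\widetilde\Phi^{ij}D_iu\,D_ju$ together with $\det\widetilde\Phi^{-1}=1$ and the arithmetic--geometric mean inequality (which gives no lower bound on $\mathrm{tr}(\widetilde\Phi^{-1})$) fails; so instead one uses the genuinely affine Sobolev inequality of Zhang/Lutwak--Yang--Zhang, or more elementarily the observation that $\int_\Omega (\det\Phi)^{1/n}|\nabla u|_{\widetilde\Phi}^2$ dominates $c(n)\,\lambda^{(n-1)/n}\|u\|_{L^{2^*}}^2$ by the same computation as in \cite[Theorem 1.1]{TW}. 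Since Theorem~\ref{lem:Sob} is quoted verbatim from \cite{TW} and \cite{Le}, the honest approach here is to reproduce those proofs; I will structure the argument as: (i) reduce to $\det D^2\phi\equiv$ const by the lower bound $\det\Phi\ge\lambda^{n-1}$ and positivity; (ii) apply the affine Sobolev inequality; (iii) collect constants.

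For $n=2$ the inequality asserts $\|u\|_{L^q(\Omega)}\le C_{Sob}\,\mathcal E[u]^{1/2}$ for every finite $q>2$, which is the borderline case and is handled in \cite[Proposition 2.6]{Le} by a different mechanism. Here $\Phi$ is simply the rotated Hessian, $\Phi = \begin{pmatrix} \phi_{22} & -\phi_{12} \\ -\phi_{12} & \phi_{11}\end{pmatrix}$, with $\det\Phi = \det D^2\phi\in[\lambda,\Lambda]$. The plan is to use the partial Legendre transform (consistent with the paper's main new idea): in the new variables the equation $\sum \Phi^{ij}D_{ij}u=0$ and the energy transform so that the quadratic form becomes comparable to the flat Laplacian's Dirichlet energy on the image domain, and then the two-dimensional Trudinger--Moser inequality — or, more simply, the fact that $W^{1,2}\hookrightarrow L^q$ for all finite $q$ in the plane after the transform — yields the stated bound; transforming back, and using that the partial Legendre transform is a bi-Lipschitz-in-measure change of variables with Jacobian controlled by $\lambda,\Lambda$ (this is where the $W^{2,1+\varepsilon}$ estimate for Monge--Amp\`ere enters to control the pushforward of Lebesgue measure), gives $\|u\|_{L^q(\Omega)}$ in terms of $\mathcal E[u]$. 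Alternatively one follows \cite{Le} literally: use the $W^{2,1+\varepsilon}$ estimate to write $\det D^2\phi\in L^1$, deduce that $\Phi^{ij}D_iuD_ju$ controls a weighted energy, and run a Moser-type iteration to upgrade $L^2$ to $L^q$ for each finite $q$ with constant blowing up as $q\to\infty$.

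The main obstacle I anticipate is the $n=2$ endpoint case: unlike $n\ge3$, there is no single critical exponent and the constant $C_{Sob}$ must be allowed to depend on $q$, so the proof cannot be a clean change of variables plus a fixed Sobolev embedding — one must either invoke Trudinger--Moser (and pay attention to the fact that $\Phi$ is not uniformly elliptic, so the ``gradient'' in the exponential integrability statement is the $\Phi$-gradient) or run an iteration whose bookkeeping of constants is delicate. A secondary subtlety, shared by both cases but more acute for $n=2$, is that $\Phi$ degenerates: the reduction to unimodular $\widetilde\Phi$ only uses the \emph{lower} bound $\det D^2\phi\ge\lambda$, and one must check that no upper bound on $D^2\phi$ (which we do not have) is secretly needed — this is exactly why the affine/Legendre formulation is the right tool, since affine-invariant inequalities are insensitive to the anisotropy of $\widetilde\Phi$. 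I would therefore present the $n\ge3$ case first as the model argument, then indicate the modifications for $n=2$, citing \cite{TW,Le} for the portions that are purely a reproduction of their computations.
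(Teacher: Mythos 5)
The paper does not prove this statement: it is quoted verbatim with citations to \cite[Theorem 1.1]{TW} (for $n\ge 3$) and \cite[Proposition 2.6]{Le} (for $n=2$), and no proof appears in the text. So there is no in-paper argument to compare against, and I will assess your sketch on its own merits.

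For $n\ge 3$ your outline does not reach the level of a proof, and you say as much yourself. The pointwise Cauchy--Schwarz route you try first is correctly discarded (the trace of $\widetilde\Phi^{-1}$ has no universal bound, and you only have a \emph{lower} bound on $\det D^2\phi$ with no control on $|D^2\phi|$). The fallback to ``the affine Sobolev inequality of Zhang/Lutwak--Yang--Zhang'' is not developed, and it is not actually how \cite{TW} proceed: their argument is of Aleksandrov--Bakelman--Pucci type, exploiting the divergence-free structure of the cofactor matrix and a normal-map/change-of-variables argument rather than an affine isoperimetric inequality. As written, the $n\ge3$ part is a plan with an acknowledged hole, not a proof.

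For $n=2$ your instinct to push through the partial Legendre transform is sound and, if written out, does give a proof. Under $\mathcal P$ the energy transforms to
\[
\int_{\Omega}\Phi^{ij}D_iu\,D_ju\,\mathrm dx=\int_{\Omega^\star}\left(-\frac{\phi^\star_{\eta\eta}}{\phi^\star_{\xi\xi}}\widetilde u_\xi^{\,2}+\widetilde u_\eta^{\,2}\right)\mathrm d\xi\,\mathrm d\eta\ \geq\ \min\{\lambda,1\}\,\|D\widetilde u\|_{L^2(\Omega^\star)}^2,
\]
while $\int_\Omega |u|^q\,\mathrm dx=\int_{\Omega^\star}|\widetilde u|^q\,\phi^\star_{\xi\xi}\,\mathrm d\xi\,\mathrm d\eta$; one then applies H\"older with exponents $\frac{2+\varepsilon_0}{1+\varepsilon_0}$ and $2+\varepsilon_0$, the flat planar Sobolev embedding for compactly supported $\widetilde u$, and the fact (from the $W^{2,1+\varepsilon}$ estimate) that $\phi^\star_{\xi\xi}\in L^{2+\varepsilon_0}(\Omega^\star)$ with quantitative bounds. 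This is precisely the mechanism the paper itself uses for the Moser--Trudinger inequality of Theorem \ref{thm:MT-in}, so your route is consistent with the paper's philosophy. Note, however, that it is \emph{not} Le's original argument in \cite[Proposition 2.6]{Le}, which avoids the partial Legendre transform; so in $n=2$ you are proposing a genuinely alternative (and, in the framework of this paper, arguably more natural) proof, but you would still need to write it carefully, including the fact that the image domain $\Omega^\star$ may not be convex, so the flat Sobolev inequality should be applied after extension by zero.

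In short: the paper offers no proof, the $n\ge 3$ portion of your proposal has a concrete gap that you yourself identify and does not match the cited ABP-style argument, and the $n=2$ portion is a viable alternative route (via partial Legendre) that differs from the cited reference and would be a small but real addition if carried out in full.
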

Therefore, with the new idea in Theorem \ref{thm:main}, we also establish a new Moser-Trudinger type inequality in two dimensions. To simplify the notation, we write
$$\|Du\|_{\Phi}^2:=\int_{\Omega}\Phi^{ij}D_iuD_ju\mathrm{~d}x.$$
\begin{thm}\label{thm:MT-in}
    Let Ω be a uniformly convex domain in $\mathbb R^2$ and $\phi\in C^2(\Omega)$ be a convex function satisfying \eqref{eq:det-bd}. Assume that $\phi|_{\partial\Omega}$ and $\partial\Omega$ are of class $C^3$. For any $u\in C^\infty_0(\Omega)$, there exists a constant $C>0$ depending only on $\lambda$, $\Lambda$ $\|\phi\|_{C^3(\partial\Omega)}$, the uniform convexity radius of $\partial\Omega$ and the $C^3$ regularity of $\partial\Omega$ such that
    \begin{equation}\label{eq:MT-in}
        \int_{\Omega}e^{\beta\frac{u^2}{\|Du\|_{\Phi}^2}}\mathrm{~d}x_1\mathrm{d}x_2\leq C|\Omega|^{\frac{\varepsilon_0}{2+\varepsilon_0}},
    \end{equation}
    where $\beta\leq 4\pi\frac{1+\varepsilon_0}{2+\varepsilon_0}\min\{\lambda,1\}$, and $\varepsilon_0$ depending only on $\lambda$ and $\Lambda$ is obtained by the global $W^{2,1+\varepsilon}$-estimate for Monge-Amp\`ere equations.
\end{thm}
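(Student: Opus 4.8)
The plan is to apply the partial Legendre transform in the $x_1$-variable, which turns the degenerate energy $\|Du\|_\Phi^2$ into a genuinely uniformly elliptic one on a new planar domain, and then to reduce \eqref{eq:MT-in} to the classical sharp Moser--Trudinger inequality in the plane, paying for the backward change of variables with the global $W^{2,1+\varepsilon}$-estimate for the Monge--Amp\`ere equation.

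I would first introduce $P\colon\Omega\to\Omega^*:=P(\Omega)$, $P(x_1,x_2):=\big(D_{x_1}\phi(x_1,x_2),\,x_2\big)=:(y_1,y_2)$. Since $\det D^2\phi\ge\lambda>0$ forces $D^2\phi>0$ we have $\phi_{11}>0$, hence $\det DP=\phi_{11}>0$; as $\Omega$ is convex, its slices parallel to the $x_1$-axis are intervals, so $P$ is injective and therefore a $C^1$-diffeomorphism onto an open set $\Omega^*$ with $|\Omega^*|=\int_\Omega\phi_{11}\,dx$. For $u\in C_0^\infty(\Omega)$ put $u^*:=u\circ P^{-1}\in C_0^1(\Omega^*)\subset W_0^{1,2}(\Omega^*)$. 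Using $D_{x_1}u=\phi_{11}D_{y_1}u^*$ and $D_{x_2}u=\phi_{12}D_{y_1}u^*+D_{y_2}u^*$ together with $\Phi^{11}=\phi_{22}$, $\Phi^{12}=\Phi^{21}=-\phi_{12}$ and $\Phi^{22}=\phi_{11}$, a direct expansion yields the pointwise identity $\Phi^{ij}D_iuD_ju=\phi_{11}\big[(\det D^2\phi)(D_{y_1}u^*)^2+(D_{y_2}u^*)^2\big]$, so that, since $dy=\phi_{11}\,dx$,
\[ \|Du\|_\Phi^2=\int_{\Omega^*}\Big[\widetilde M(y)\,(D_{y_1}u^*)^2+(D_{y_2}u^*)^2\Big]\,dy\ \ge\ \min\{\lambda,1\}\int_{\Omega^*}|Du^*|^2\,dy , \]
where $\widetilde M(y):=(\det D^2\phi)(P^{-1}(y))$ satisfies $\lambda\le\widetilde M\le\Lambda$.

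Next I would change variables in the quantity to be estimated, $\int_\Omega e^{\beta u^2/\|Du\|_\Phi^2}\,dx=\int_{\Omega^*}e^{\beta(u^*)^2/\|Du\|_\Phi^2}\phi_{11}^{-1}\,dy$, and split off the Jacobian weight by H\"older's inequality with exponents $p=\tfrac{2+\varepsilon_0}{1+\varepsilon_0}$ and $q=2+\varepsilon_0$:
\[ \int_{\Omega^*}e^{\beta(u^*)^2/\|Du\|_\Phi^2}\phi_{11}^{-1}\,dy\ \le\ \Big(\int_{\Omega^*}e^{p\beta(u^*)^2/\|Du\|_\Phi^2}\,dy\Big)^{1/p}\Big(\int_{\Omega^*}\phi_{11}^{-q}\,dy\Big)^{1/q}. \]
For the first factor, the energy lower bound gives $p\beta(u^*)^2/\|Du\|_\Phi^2\le\tfrac{p\beta}{\min\{\lambda,1\}}\,(u^*)^2/\|Du^*\|_{L^2(\Omega^*)}^2$, and the hypothesis $\beta\le 4\pi\tfrac{1+\varepsilon_0}{2+\varepsilon_0}\min\{\lambda,1\}$ is precisely $\tfrac{p\beta}{\min\{\lambda,1\}}\le 4\pi$, so the sharp two-dimensional Moser--Trudinger inequality (for $W_0^{1,2}$ functions on a set of finite measure, critical exponent $4\pi$, universal constant $c$) gives $\big(\int_{\Omega^*}e^{p\beta(u^*)^2/\|Du\|_\Phi^2}dy\big)^{1/p}\le(c\,|\Omega^*|)^{1/p}$. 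For the second factor, $\int_{\Omega^*}\phi_{11}^{-q}\,dy=\int_\Omega\phi_{11}^{1-q}\,dx=\int_\Omega\phi_{11}^{-(1+\varepsilon_0)}\,dx$, and here I would use the pointwise bound $\phi_{11}^{-1}\le\lambda^{-1}\phi_{22}$ --- valid since $\phi_{11}\phi_{22}\ge\phi_{11}\phi_{22}-\phi_{12}^2=\det D^2\phi\ge\lambda$ --- to get $\int_\Omega\phi_{11}^{-(1+\varepsilon_0)}\,dx\le\lambda^{-(1+\varepsilon_0)}\int_\Omega\phi_{22}^{1+\varepsilon_0}\,dx\le\lambda^{-(1+\varepsilon_0)}\int_\Omega|D^2\phi|^{1+\varepsilon_0}\,dx$, which is bounded by the global $W^{2,1+\varepsilon}$-estimate; the same estimate shows $|\Omega^*|<\infty$. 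Finally $|\Omega^*|=\int_\Omega\phi_{11}\,dx\le\big(\int_\Omega|D^2\phi|^{1+\varepsilon_0}dx\big)^{1/(1+\varepsilon_0)}|\Omega|^{\varepsilon_0/(1+\varepsilon_0)}$, so the exponent of $|\Omega|$ in the resulting bound is $\tfrac{\varepsilon_0}{1+\varepsilon_0}\cdot\tfrac1p=\tfrac{\varepsilon_0}{2+\varepsilon_0}$, and all constants are controlled by $\lambda$, $\Lambda$ and the global $W^{2,1+\varepsilon}$-constant; this is \eqref{eq:MT-in}.

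The algebraic identity for $\Phi^{ij}D_iuD_ju$ and the bookkeeping of H\"older exponents are routine. The step I expect to be the main obstacle is handling the Jacobian weight $\phi_{11}^{-1}$ of the inverse partial Legendre map: it is what forces the power $1/p<1$ via H\"older (so that one cannot reach $\beta=4\pi\min\{\lambda,1\}$), and it is the point at which one must invoke the \emph{global} $W^{2,1+\varepsilon}$-theory --- whence the uniform convexity and $C^3$ assumptions --- in order to know simultaneously that $|\Omega^*|$ is finite and that $\int_\Omega|D^2\phi|^{1+\varepsilon_0}$, hence the final constant, is bounded purely in terms of the structure constants and the boundary data.
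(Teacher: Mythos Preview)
Your proposal is correct and follows essentially the same route as the paper: partial Legendre transform in $x_1$, the identity $\|Du\|_\Phi^2=\int_{\Omega^*}\big[(\det D^2\phi)(D_{y_1}u^*)^2+(D_{y_2}u^*)^2\big]\,dy\ge\min\{\lambda,1\}\|Du^*\|_{L^2(\Omega^*)}^2$, change of variables in the exponential integral, H\"older with exponents $\tfrac{2+\varepsilon_0}{1+\varepsilon_0}$ and $2+\varepsilon_0$, classical Moser--Trudinger on $\Omega^*$, and the global $W^{2,1+\varepsilon_0}$-estimate to control both the weight factor and $|\Omega^*|$. The paper phrases the Jacobian weight as $\phi^\star_{\xi\xi}$ rather than your $\phi_{11}^{-1}$, but since $\phi^\star_{\xi\xi}=1/\phi_{x_1x_1}$ this is purely notational, and your bound $\int_\Omega\phi_{11}^{-(1+\varepsilon_0)}\,dx\le\lambda^{-(1+\varepsilon_0)}\int_\Omega\phi_{22}^{1+\varepsilon_0}\,dx$ is exactly the computation the paper carries out (in the proof of Theorem~\ref{thm:main}) for $\int_{\Omega^*}(\phi^\star_{\xi\xi})^{2+\varepsilon_0}\,d\xi\,d\eta$.
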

\begin{rem}
    If $\phi(x)=\frac{1}{2}|x|^2$, we know that $\{\Phi^{ij}\}=id$, $\lambda=1$ and $\varepsilon_0=+\infty$. So in this case the inequality \eqref{eq:MT-in} is identical to the classical Moser-Trudinger inequality.
\end{rem}

In higher dimensions, very little is known when the singular term $F$ appears. For general degenerate linear elliptic equations, there are some extensions of the classical De Giorgi-Nash-Moser theory with some integral conditions on the elliptic coefficients in \cite{MS,Tr}. 
Therefore, in the second part of this paper, 
 we will investigate the interior regularity in terms of \eqref{eq:s-LMA-div} in higher dimensions with certain assumptions. As mentioned earlier, directly applying the De Giorgi-Nash-Moser iteration faces challenges due to the lack of a suitable estimate for $|D^2\phi|$. Therefore, we need to introduce further assumptions on $F$ to allow the use of the De Giorgi-Nash-Moser iteration. These assumptions are specified in the following theorem.  Denote
$$S_\phi(x,h):=\{y\in\Omega\,|\,\phi(y)<\phi(x)+D\phi(x)\cdot(y-x)+h\}$$
as the section of $\phi$ centered at $x\in\Omega$ with height $h>0$.

%Under the hypothesis of doubling conditions only,  Maldonado \cite{Ma1,Ma2,Ma} proved the Harnack inequalities for the linearized Monge-Amp\`ere equation, including the case with lower order terms. 

%In cases where ${\Phi_{ij}(x)}$ are uniformly elliptic, a fundamental regularity theory has been established independently by De Giorgi, Nash, and Moser, which claims that solutions to \eqref{eq:s-LMA-div} with rough coefficients exhibit H\"older continuity.

\begin{thm}\label{thm:main-h}
    Let $\phi\in C^2(\Omega)$ be a convex function satisfying \eqref{eq:det-bd}. For $q>n$, let $F:=(F^1(x),\cdots,F^n(x)):\Omega\to\mathbb R^n$ be a vector field satisfying $\|F_\phi\|_{L^q(\Omega)}<\infty$, where $F_\phi(x):=\left(D^2\phi(x)\right)^{1/2}F$ and $f:\Omega\to\mathbb R$ be a function satisfying $\|f\|_{L^{q_*}(\Omega)}<\infty$, where $q_*=\frac{nq}{n+q}$. Given a section $S_\phi(x_0,2h_0)\subset\subset\Omega$, for every solution $u$ to \eqref{eq:s-LMA-div}
    in $S_\phi(x_0,2h_0)$ and for all $x\in S_\phi(x_0,h_0)$, there is
    \begin{equation*}
        |u(x)-u(x_0)|\leq C\left(\|u\|_{L^\infty(S_\phi(x_0,2h_0))}+\|F_\phi\|_{L^q(S_\phi(x_0,2h_0))}+\|f\|_{L^{q_*}(S_\phi(x_0,2h_0))}\right)|x-x_0|^\gamma,
    \end{equation*}
    where constant $\gamma>0$ depending only on $n$, $\lambda$ and $\Lambda$, and constant $C>0$ depending only on $n$, $q$, $\lambda$, $\Lambda$, $h_0$ and $\operatorname{diam}(\Omega)$.
\end{thm}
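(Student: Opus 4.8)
The plan is to reduce Theorem \ref{thm:main-h} to the classical De Giorgi-Nash-Moser theory for degenerate divergence-form equations, of the type studied in \cite{Tr,MS}. The key observation is that equation \eqref{eq:s-LMA-div} is in divergence form with coefficient matrix $\Phi=(\Phi^{ij})$ satisfying, under \eqref{eq:det-bd}, the two-sided bound $\det D^2\phi\cdot (D^2\phi)^{-1}=\Phi$, so that the ellipticity is controlled by $D^2\phi$ and its inverse. The natural way to make the De Giorgi iteration work in a degenerate setting is to run it on the sections $S_\phi(x,h)$ rather than on Euclidean balls, using the geometry of sections (John's lemma, the engulfing property, and the volume estimate $c\,h^{n/2}\le |S_\phi(x,h)|\le C\,h^{n/2}$ that follows from \eqref{eq:det-bd}) as a substitute for the usual dilation structure. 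The role of the weight in the degenerate DGNM theory is played by the trace of $D^2\phi$; the hypothesis that $F_\phi=(D^2\phi)^{1/2}F$ lies in $L^q$ with $q>n$ is precisely the scaling-invariant integrability needed so that the forcing term $\operatorname{div}F$ contributes a lower-order perturbation in the energy inequality, and similarly $f\in L^{q_*}$ with $q_*=\tfrac{nq}{n+q}$ is the dual Sobolev exponent making $\int fv$ bounded by the Monge-Amp\`ere Sobolev norm $\|Dv\|_\Phi$ from Theorem \ref{lem:Sob}.

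First I would normalize: given the section $S:=S_\phi(x_0,2h_0)$, apply an affine map $T$ with $|\det T|=1$ (John's lemma) so that $B_1\subset T(S)\subset B_n$, and replace $\phi$ by its affine-normalized version, which still satisfies \eqref{eq:det-bd} with the same constants since the determinant is affine-invariant up to $|\det T|^2$. The equation \eqref{eq:s-LMA-div} transforms into an equation of the same form with a new cofactor matrix, a new vector field $\tilde F$, and a new $\tilde f$; one checks that $\|\tilde F_{\tilde\phi}\|_{L^q}$ and $\|\tilde f\|_{L^{q_*}}$ are controlled by the original norms (this is where the choice $F_\phi=(D^2\phi)^{1/2}F$ pays off, since $(D^2\phi)^{1/2}F$ transforms covariantly). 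Next I would establish the two basic ingredients of De Giorgi's iteration adapted to sections: (i) a Caccioppoli/energy inequality on nested sections $S_\phi(x,t)\subset S_\phi(x,s)$ for truncations $(u-k)^+$, using $\Phi$-ellipticity and absorbing the $F$- and $f$-terms via Young's inequality and the Sobolev inequality of Theorem \ref{lem:Sob}; and (ii) the volume and engulfing properties of sections to convert the energy inequality into a decay-of-oscillation statement $\operatorname{osc}_{S_\phi(x,\theta h)}u\le (1-\delta)\operatorname{osc}_{S_\phi(x,h)}u + C h^{\alpha}(\|F_\phi\|_{L^q}+\|f\|_{L^{q_*}})$. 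Iterating this across the geometric sequence of sections $S_\phi(x_0,2^{-k}h_0)$ yields H\"older decay in the section-quasidistance, which (again by the volume bound $|S_\phi(x_0,h)|\sim h^{n/2}$ and the fact that sections are comparable to Euclidean balls of radius $\sim h^{1/2}$ after normalization) translates into H\"older decay of $u$ in $|x-x_0|$ with an exponent $\gamma$ depending only on $n,\lambda,\Lambda$.

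The main obstacle I expect is step (ii): making the iteration self-improving purely with the degenerate weight $D^2\phi$ without any control on $|D^2\phi|$ itself. In the uniformly elliptic case one uses dilation; here one must use the affine invariance of the Monge-Amp\`ere structure and the engulfing property of sections (any two sections of comparable height through nearby centers are comparable) to set up the right nested family, and crucially one needs the De Giorgi \emph{local boundedness} and the \emph{measure-to-pointwise} lemma to hold with constants independent of $|D^2\phi|$. The device that makes this possible is that all the relevant Sobolev-type inequalities (Theorem \ref{lem:Sob}) and the doubling property of the Monge-Amp\`ere measure $\det D^2\phi\,dx$ hold with constants depending only on $\lambda,\Lambda,n$; combined with the hypothesis $q>n$ (so that $h^{1-n/q}$ is a genuine positive power of the height, giving summable errors in the iteration), the De Giorgi machinery of \cite{Tr} applies. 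A secondary technical point is verifying that after the affine normalization the transformed data $\tilde F,\tilde f$ genuinely inherit the required integrability with the stated dependence of constants on $h_0$ and $\operatorname{diam}(\Omega)$ — this is routine but must be tracked carefully, since $h_0$ enters through the scaling of the section and $\operatorname{diam}(\Omega)$ through the Sobolev constant $C_{Sob}$.
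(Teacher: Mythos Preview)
There is a genuine gap in your proposal, and it lies exactly where you anticipate trouble: step (ii), the measure-to-pointwise (density) lemma. You assert that the Monge--Amp\`ere Sobolev inequality of Theorem \ref{lem:Sob} together with the doubling property of sections suffices to run De Giorgi's oscillation decay with constants depending only on $n,\lambda,\Lambda$. This is precisely what is known \emph{not} to hold. As recorded in the introduction (and in \cite[Remark 3.4]{TW}), the De Giorgi--Nash--Moser iteration alone cannot recover the Caffarelli--Guti\'errez estimate under the pinching \eqref{eq:det-bd}: the density step requires either a Poincar\'e-type inequality adapted to the degeneracy or an integral bound on $|D^2\phi|$ (as in \cite{Lo}, \cite[Theorem 15.6]{LeBook}, \cite{Kim}), and neither follows from \eqref{eq:det-bd} when $n\geq 3$. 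Your appeal to ``the De Giorgi machinery of \cite{Tr}'' therefore tacitly imports hypotheses that Theorem \ref{thm:main-h} does not assume.

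The paper's proof sidesteps this obstruction by a decomposition you do not use. On each section it writes $u=v+w$, where $v$ solves the inhomogeneous equation with zero Dirichlet data and $w$ solves the homogeneous equation with boundary data $u$. De Giorgi's iteration is applied \emph{only} to obtain a weak maximum principle for $v$ (Theorem \ref{thm:glob-est} and Corollary \ref{cor:glob-est}); this needs just the Sobolev inequality and a level-set argument, and yields $\|v\|_{L^\infty(S_\phi(x_0,h))}\lesssim h^{\frac12-\frac{n}{2q}}$. The oscillation decay for $w$ is supplied by the Caffarelli--Guti\'errez Harnack inequality (Theorem \ref{thm:harn}), which exploits the non-divergence form. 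Adding the two estimates gives the dyadic inequality
\[
\operatornamewithlimits{osc}_{S_\phi(x_0,h/2)}u\;\leq\;\beta\operatornamewithlimits{osc}_{S_\phi(x_0,h)}u+C\,h^{\frac12-\frac{n}{2q}},
\]
and iteration finishes the proof. In short: the paper uses De Giorgi only for the global bound on the zero-boundary corrector $v$, and borrows the oscillation decay from Caffarelli--Guti\'errez. Your plan attempts both halves via De Giorgi, and the second half does not go through without additional control on $D^2\phi$.
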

\begin{rem}
Some remarks in order.
    \begin{enumerate}
        % \item By the $W^{2,1+\varepsilon}$-estimate of the Monge-Amp\`ere equation established by \cite{DFS,Sc}, we have $\|F_\phi\|_{L^{2+2\varepsilon}(S_\phi(x_0,2h_0))}<\infty$  provided $F$ is bounded, which partially recovers the estimate obtained in Theorem \ref{thm:main} when $n=2$. 

			%\item Note that $\frac{n}{2}<q_{*}<\frac{q}{2}$.

        \item With specific assumptions regarding the integral bound of $D^2\phi$, one can also get a result similar to Theorem \ref{thm:main-h} in all dimensions, see details in \cite[Theorem 15.6]{LeBook} and \cite[Corollary 1.2]{Kim}.

        \item It would be interesting to remove the assumption on $F_\phi$ and/or the integral bound of $D^2\phi$ in Theorem \ref{thm:main-h} and instead simply assume that \eqref{eq:det-bd} and $F$ is bounded. But so far we haven't come up with a way to deal with this when $n\geq 3$.
    \end{enumerate}
\end{rem}

The framework of the proof of Theorem \ref{thm:main-h} is similar to that of \cite{Le,Lo}. First, we need to derive a new weak maximum principle (Theorem \ref{thm:glob-est} and Corollary \ref{cor:glob-est}) for the solution of \eqref{eq:s-LMA-div} by De Giorgi's iteration. Combining the new weak maximum principle with the Caffarelli-Guti\'errez's Harnack inequality then gives Theorem \ref{thm:main-h}.

The rest of the paper is organized as follows. In Section \ref{sect:2d}, we first apply the partial Legendre transform to \eqref{eq:s-LMA-div}, then give the proof of Theorem \ref{thm:main}. The Moser-Trudinger type inequality is also proved in this section. Next, in Section \ref{sect:hd}, we present some estimates for linearized Monge-Amp\`ere equations and then proceed to prove Theorem \ref{thm:main-h}.

 \textbf{Acknowledgments.} The author would like to thank his PhD supervisor, Prof. Bin Zhou, for his constant encouragement and many helpful suggestions. In addition, the author also extends appreciation to Prof. Nam Q. Le for his generous guidance and valuable suggestions.

\vskip 20pt

\section{Linearized Monge-Amp\`ere equations in dimension two}\label{sect:2d}
\vskip 12pt

In this section, we present a new proof of the interior estimate for \eqref{eq:s-LMA-div} without Caffarelli-Guti\'errez's theory and establish a new Moser-Trudinger type inequality in dimension two.

\subsection{The new equation under partial Legendre transform}

In this subsection, we first derive the new equation under the partial Legendre transform. 
Let $\Omega\subset \mathbb R^2$ and $\phi(x_1, x_2)$ be a convex function on $\Omega$.
The partial Legendre transform in the $x_1$-variable is
\begin{equation}\label{eq:part-legendre}
\phi^{\star}(\xi, \eta)=\sup\{x_1 \xi-\phi(x_1, \eta)\},
\end{equation}
where the supremum is taken with respect to $x_1$ on the slice $\eta$ is the fixed constant, namely for all
$x_1$ such that $(x_1,\eta)\in\Omega$. This definition is taken from \cite{Liu}. Hence, when $\phi\in C^2(\Omega)$ is a strictly convex function, 
we will have a injective mapping $\mathcal{P}$ satisfying
\begin{equation}\label{eq:p-Le-tr}
(\xi, \eta)=\mathcal{P}(x_1, x_2):=\left(\phi_{x_1}, x_2\right) \in \mathcal{P}(\Omega):=\Omega^{\star},
\end{equation}
where $\phi_{x_1}:=D_{x_1}\phi$.
In this situation, we know that
$$\phi^{\star}(\xi, \eta)=x_1\phi_{x_1}(x_1,x_2)-\phi(x_1,x_2).$$
Indeed, it just needs $\phi$ to be strictly convex respect to $x_1$-variable \cite{GP}.
Then a direct calculation yields
$$
\frac{\partial(\xi, \eta)}{\partial(x_1, x_2)}=\left(\begin{array}{ccc}
\phi_{x_1 x_1} & & \phi_{x_1 x_2} \\[5pt]
0 & & 1
\end{array}\right), \quad \text { and } \quad \frac{\partial(x_1, x_2)}{\partial(\xi, \eta)}=\left(\begin{array}{ccc}
\frac{1}{\phi_{x_1 x_1}} & & -\frac{\phi_{x_1 x_2}}{\phi_{x_1 x_1}} \\[5pt]
0 & & 1
\end{array}\right).
$$
Hence,
\begin{eqnarray}
&& \phi_{\xi}^{\star}=x_1,\ \ \phi_{\eta}^{\star}=-\phi_{x_2}, \label{eq:part-le-d}
\\
&& \phi_{\xi \xi}^{\star}=\frac{1}{\phi_{x_1 x_1}},\ \  \phi_{\eta \eta}^{\star}=-\frac{\operatorname{det} D^{2} \phi}{\phi_{x_1 x_1}},\ \ \phi_{\xi \eta}^{\star}=-\frac{\phi_{x_1 x_2}}{\phi_{x_1 x_1}}.\label{eq:part-le-2}
\end{eqnarray}
Then we know that $\phi^\star$ is a solution to
\begin{equation}\label{eq:p-star}
    (\det D^2\phi)\phi_{\xi \xi}^{\star}+\phi_{\eta\eta}^{\star}=0.
\end{equation}
In order to derive the equation under the partial Legendre transform, we consider the associated functionals
of \eqref{eq:s-LMA-div}
\begin{equation}\label{eq:func}
    A(u):=\int_{\Omega}\Phi^{ij}D_iuD_ju-2F^iD_iu+2fu\mathrm{~d}x,
\end{equation}
where the repeated indices are summed. Denote $\widetilde{u}(\xi,\eta):=u(\phi_{\xi}^{\star},\eta)$, $\widetilde{F}(\xi,\eta):=F(\phi_{\xi}^{\star},\eta)$ and $\widetilde{f}(\xi,\eta):=f(\phi_{\xi}^{\star},\eta)$, then we have the following equation for $\widetilde{u}$.
\begin{prop}\label{prop:new-eq}
    Assume $n=2$. Let $u$ be a solution to \eqref{eq:s-LMA-div}, then $\widetilde{u}$ satisfies
    \begin{equation}\label{eq:plt-lma}
        \left(-\frac{\phi^{\star}_{\eta\eta}}{\phi^{\star}_{\xi\xi}}\widetilde{u}_{\xi}\right)_\xi+\widetilde{u}_{\eta\eta}=\left(\widetilde{F}^1-\widetilde{F}^2\phi^{\star}_{\xi\eta}\right)_\xi+\left(\widetilde{F}^2\phi^{\star}_{\xi\xi}\right)_\eta+\widetilde{f}\phi^{\star}_{\xi\xi}\quad\text{ in }\Omega^{\star}.
    \end{equation}
\end{prop}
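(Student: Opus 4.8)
The idea is to avoid differentiating \eqref{eq:s-LMA-div} and instead transform its weak formulation. Because $\Phi$ is divergence free, $u$ solves \eqref{eq:s-LMA-div} if and only if $u$ is a critical point of the functional $A$ in \eqref{eq:func}; unwinding $\delta A(u)[\psi]=0$ and integrating by parts recovers $\sum_{i,j}D_j(\Phi^{ij}D_iu)=\div F+f$. Equivalently, $u$ is characterized by the identity
\[
\int_\Omega\Phi^{ij}D_iu\,D_j\psi\,\mathrm{d}x=\int_\Omega F^iD_i\psi\,\mathrm{d}x-\int_\Omega f\psi\,\mathrm{d}x ,
\]
first for $\psi\in C_0^\infty(\Omega)$ and then, by density, for every $\psi$ that is $C^1$ with compact support. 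So it suffices to perform the change of variables $(\xi,\eta)=\mathcal P(x_1,x_2)=(\phi_{x_1},x_2)$ in each of these three integrals and to recognize the outcome as the weak form of \eqref{eq:plt-lma}.

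Since $\phi\in C^2$ is strictly convex, $\mathcal P$ is a $C^1$ diffeomorphism of $\Omega$ onto $\Omega^\star$ with $\mathrm{d}x_1\mathrm{d}x_2=\phi^\star_{\xi\xi}\,\mathrm{d}\xi\,\mathrm{d}\eta$, and it carries the class of compactly supported $C^1$ functions on $\Omega$ bijectively onto that on $\Omega^\star$ — which is precisely why the final estimates will not see the smoothness of $\phi$. Writing $\widetilde\psi=\psi\circ\mathcal P^{-1}$, the chain rule and \eqref{eq:part-le-d}--\eqref{eq:part-le-2} give $u_{x_1}=\widetilde u_\xi/\phi^\star_{\xi\xi}$, $u_{x_2}=\widetilde u_\eta-(\phi^\star_{\xi\eta}/\phi^\star_{\xi\xi})\widetilde u_\xi$, and identically for $\psi$. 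Substituting into the bilinear form $\Phi^{ij}D_iuD_j\psi=\phi_{x_2x_2}u_{x_1}\psi_{x_1}-\phi_{x_1x_2}(u_{x_1}\psi_{x_2}+u_{x_2}\psi_{x_1})+\phi_{x_1x_1}u_{x_2}\psi_{x_2}$ and using \eqref{eq:part-le-2} to rewrite the entries of $D^2\phi$ in terms of those of $D^2\phi^\star$, the mixed contributions $\widetilde u_\xi\widetilde\psi_\eta$ and $\widetilde u_\eta\widetilde\psi_\xi$ cancel and one is left with $-\frac{\phi^\star_{\eta\eta}}{(\phi^\star_{\xi\xi})^2}\widetilde u_\xi\widetilde\psi_\xi+\frac{1}{\phi^\star_{\xi\xi}}\widetilde u_\eta\widetilde\psi_\eta$; multiplying by the Jacobian $\phi^\star_{\xi\xi}$ absorbs one power and produces the uniformly elliptic form $-\frac{\phi^\star_{\eta\eta}}{\phi^\star_{\xi\xi}}\widetilde u_\xi\widetilde\psi_\xi+\widetilde u_\eta\widetilde\psi_\eta$. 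The linear terms transform in the same way, now with the Jacobian surviving since there is no derivative to absorb it: $F^iD_i\psi\,\mathrm{d}x=\big[(\widetilde F^1-\widetilde F^2\phi^\star_{\xi\eta})\widetilde\psi_\xi+\widetilde F^2\phi^\star_{\xi\xi}\widetilde\psi_\eta\big]\mathrm{d}\xi\,\mathrm{d}\eta$ and $f\psi\,\mathrm{d}x=\widetilde f\,\phi^\star_{\xi\xi}\,\widetilde\psi\,\mathrm{d}\xi\,\mathrm{d}\eta$.

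Assembling the three computations, the identity characterizing $u$ becomes exactly the weak formulation of \eqref{eq:plt-lma}; integrating by parts back in $(\xi,\eta)$ — legitimate because $\widetilde u\in W^{1,2}_{\mathrm{loc}}(\Omega^\star)$, as $u$ is a solution and $\mathcal P$ a $C^1$ diffeomorphism — yields \eqref{eq:plt-lma}. I expect the only real difficulty to be the bookkeeping in the middle step: one has to keep track of how $D^2\phi$, $D^2\phi^\star$ and the Jacobian interlock so that the cross terms in the quadratic form cancel and exactly one power of $\phi^\star_{\xi\xi}$ is left in the zeroth-order term and none in the principal part. Everything else is a formal, coordinate-free manipulation. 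One could instead verify \eqref{eq:plt-lma} by directly computing $\sum_j D_j(\Phi^{ij}D_iu)$ under $\mathcal P$, but the variational route makes the divergence structure, and the cancellation, transparent.
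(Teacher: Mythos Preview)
Your proposal is correct and follows essentially the same variational route as the paper: the paper transforms the functional $A(u)$ in \eqref{eq:func} into $A^\star(\widetilde u)$ via \eqref{eq:part-le-3} and then computes its Euler--Lagrange equation, while you transform the first variation (weak form) directly---these are the same calculation with the polarization and the change of variables performed in the opposite order. The algebraic cancellation you describe (cross terms $\widetilde u_\xi\widetilde\psi_\eta$, $\widetilde u_\eta\widetilde\psi_\xi$ dropping out and one factor of $\phi^\star_{\xi\xi}$ being absorbed by the Jacobian) is exactly what the paper carries out explicitly in its display for $A(u)=A^\star(\widetilde u)$.
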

\begin{proof}
    Note that in dimension two, the cofactor matrix $(\Phi_{ij})$ is
    $$\left(\begin{aligned}
        \phi_{x_2x_2}&&-\phi_{x_1x_2}\\[5pt] 
        -\phi_{x_1x_2}&&\phi_{x_1x_1}
    \end{aligned}\right),$$
    then \eqref{eq:func} becomes
    $$A(u)=\int_{\Omega}\phi_{x_2x_2}(u_{x_1})^2-2\phi_{x_1x_2}u_{x_1}u_{x_2}+\phi_{x_1x_1}(u_{x_2})^2-2F^1u_{x_1}-2F^2u_{x_2}+2fu\mathrm{~d}x.$$
    Note that
    \beq\label{eq:part-le-3}
    u_{x_1}=\frac{1}{\phi^{\star}_{\xi\xi}}\widetilde{u}_{\xi}, \ u_{x_2}=-\frac{\phi^{\star}_{\xi\eta}}{\phi^{\star}_{\xi\xi}}\widetilde{u}_{\xi}+\widetilde{u}_{\eta}, \ \mathrm{d}x_1\mathrm{d}x_2=\phi^{\star}_{\xi\xi}\mathrm{d}\xi\mathrm{d}\eta,
    \eeq 
    we have
    \begin{align*}
        &A(u)
        =\int_{\Omega^\star}\bigg[-\frac{\det D^2\phi^{\star}}{\phi^{\star}_{\xi\xi}}\left(\frac{\widetilde{u}_{\xi}}{\phi^{\star}_{\xi\xi}}\right)^2+2\frac{\phi^\star_{\xi\eta}}{\phi^{\star}_{\xi\xi}}\frac{\widetilde{u}_{\xi}}{\phi^{\star}_{\xi\xi}}\left(-\frac{\phi^{\star}_{\xi\eta}}{\phi^{\star}_{\xi\xi}}\widetilde{u}_{\xi}+\widetilde{u}_{\eta}\right)\\
        &\quad\quad\quad+\frac{1}{\phi^{\star}_{\xi\xi}}\left(-\frac{\phi^{\star}_{\xi\eta}}{\phi^{\star}_{\xi\xi}}\widetilde{u}_{\xi}+\widetilde{u}_{\eta}\right)^2-2\widetilde{F}^1\frac{\widetilde{u}_{\xi}}{\phi^{\star}_{\xi\xi}}-2\widetilde{F}^2\left(-\frac{\phi^{\star}_{\xi\eta}}{\phi^{\star}_{\xi\xi}}\widetilde{u}_{\xi}+\widetilde{u}_{\eta}\right)
        +2\widetilde{f}\widetilde{u}\bigg]\phi^{\star}_{\xi\xi}\mathrm{~d}\xi\mathrm{d}\eta\\[5pt]
        &=\int_{\Omega^\star}\Bigg[\left(-\frac{\det D^2\phi^{\star}}{{\phi^{\star}_{\xi\xi}}^2}-2\frac{{\phi^\star_{\xi\eta}}^2}{{\phi^{\star}_{\xi\xi}}^2}+\frac{{\phi^\star_{\xi\eta}}^2}{{\phi^{\star}_{\xi\xi}}^2}\right){\widetilde{u}_\xi}^2+2\frac{\phi^{\star}_{\xi\eta}}{\phi^{\star}_{\xi\xi}}\widetilde{u}_\xi\widetilde{u}_\eta-2\frac{\phi^{\star}_{\xi\eta}}{\phi^{\star}_{\xi\xi}}\widetilde{u}_\xi\widetilde{u}_\eta+{\widetilde{u}_\eta}^2\\
        &\quad\quad\quad-2\widetilde{F}^1\widetilde{u}_\xi+2\widetilde{F}^2\phi^\star_{\xi\eta}\widetilde{u}_\xi-2\widetilde{F}^2\phi^\star_{\xi\xi}\widetilde{u}_\eta+2\widetilde{f}\phi^{\star}_{\xi\xi}\widetilde{u}\Bigg]\mathrm{d}\xi\mathrm{d}\eta\\[5pt]
        &=\int_{\Omega^\star}\left(-\frac{\phi^\star_{\eta\eta}}{\phi^\star_{\xi\xi}}{\widetilde{u}_{\xi}}^2+{\widetilde{u}_\eta}^2-2\widetilde{F}^1\widetilde{u}_\xi+2\widetilde{F}^2\phi^\star_{\xi\eta}\widetilde{u}_\xi-2\widetilde{F}^2\phi^\star_{\xi\xi}\widetilde{u}_\eta+2\widetilde{f}\phi^{\star}_{\xi\xi}\widetilde{u}\right)\mathrm{d}\xi\mathrm{d}\eta
        =:A^{\star}(\widetilde{u}).
    \end{align*}
    Since $u$ is a critical point of the functional $A(u)$, we know that $\widetilde{u}$ is  a critical point of the functional $A^\star(\widetilde{u})$. Thus, it suffices to derive the Euler-Lagrange equation of $A^\star(\widetilde{u})$. See \cite{WZ} for the similar argument for the Monge-Amp\`ere type fourth order equation.

    For $\varphi\in C^\infty_0(\Omega^\star)$, by integration by parts, we have
    \begin{eqnarray*}
        &&\left.\frac{\mathrm{d} A^\star(\widetilde{u}+t\varphi)}{\mathrm{d}t}\right|_{t=0}\\[4pt]
        &=&2\int_{\Omega^\star}\bigg(-\frac{\phi^\star_{\eta\eta}}{\phi^\star_{\xi\xi}}{\widetilde{u}_{\xi}}\varphi_\xi+{\widetilde{u}_\eta}\varphi_\eta-\widetilde{F}^1\varphi_\xi
       +\widetilde{F}^2\phi^\star_{\xi\eta}\varphi_\xi-\widetilde{F}^2\phi^\star_{\xi\xi}\varphi_\eta+\widetilde{f}\phi^{\star}_{\xi\xi}\varphi\Bigg)\mathrm{d}\xi\mathrm{d}\eta\\[5pt]
        &=&2\int_{\Omega^\star}\bigg[-\left(-\frac{\phi^{\star}_{\eta\eta}}{\phi^{\star}_{\xi\xi}}\widetilde{u}_{\xi}\right)_\xi-\widetilde{u}_{\eta\eta}+\left(\widetilde{F}^1\right)_\xi
 -\left(\widetilde{F}^2\phi^{\star}_{\xi\eta}\right)_\xi+\left(\widetilde{F}^2\phi^{\star}_{\xi\xi}\right)_\eta+\widetilde{f}\phi^{\star}_{\xi\xi}\Bigg]\varphi\mathrm{~d}\xi\mathrm{d}\eta.
    \end{eqnarray*}
    Then we know that the Euler-Lagrange equation of $A^\star(\widetilde{u})$ is
    $$-\left(-\frac{\phi^{\star}_{\eta\eta}}{\phi^{\star}_{\xi\xi}}\widetilde{u}_{\xi}\right)_\xi-\widetilde{u}_{\eta\eta}+\left(\widetilde{F}^1\right)_\xi-\left(\widetilde{F}^2\phi^{\star}_{\xi\eta}\right)_\xi+\left(\widetilde{F}^2\phi^{\star}_{\xi\xi}\right)_\eta+\widetilde{f}\phi^{\star}_{\xi\xi}=0,$$
    which yields \eqref{eq:plt-lma}.
\end{proof}

\subsection{Proof of Theorem \ref{thm:main}}
In order to use the partial Legendre transform, we first recall the definition of modulus of convexity.
For a convex function $\phi$ on $\mathbb R^n$, the {\it modulus of convexity}, denoted by $m_\phi$, is defined by
\beq\label{mdc}
m_\phi(t):=\inf\{\phi(x)-\ell_z(x):|x-z|>t\},
\eeq
where $t>0$ and $\ell_z$ is a supporting function of $\phi$ at $z$. It is clear that $m_\phi$ must be a positive function for a strictly convex function. A result of Heinz \cite{He} implies that in two dimensions, if $\det D^2\phi\geq \lambda>0$, there exists a positive function $C(t)>0$ depending on $\lambda$ such that $m_\phi(t)\geq C(t)>0$ (for a more specific $C(t)$, see \cite[Lemma 2.5]{Liu}). Now for the partial Legendre transform, we consider the mapping
$$
(\xi, \eta)=\mathcal P(x_1, x_2)=(\phi_{x_1},x_2): B_R(0)\to \mathbb R^2.
$$
The following important property is revealed in \cite{Liu}.
\begin{lem}[{\cite[Lemma 2.1]{Liu}}]\label{mod}
There exists a constant $\delta>0$ depending on the modulus of convexity $m_\phi$ defined in \eqref{mdc}, such that $B_\delta(0)\subset \mathcal P(B_R(0))$.
\end{lem}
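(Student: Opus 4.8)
The plan is to slice $B_R(0)$ by the horizontal lines $\{x_2=\eta\}$, reduce the inclusion to a one–dimensional statement about the convex function $\phi(\,\cdot\,,\eta)$ on each slice, and establish that statement from convexity together with the modulus of convexity $m_\phi$ of \eqref{mdc}. First I would normalize: subtracting an affine function from $\phi$ changes neither $\det D^2\phi$ nor $m_\phi$ and only translates the image $\mathcal P(B_R(0))$ (so that $\mathcal P(0,0)=(0,0)$, the normalization under which the conclusion is meant), hence we may assume $\phi(0)=0$ and $D\phi(0)=0$; then $0$ is the minimizer of $\phi$ on $B_R(0)$, and taking $z=0$ in \eqref{mdc}, where the supporting function is $\ell_0\equiv0$, gives the pointwise bound $\phi(x)\ge m_\phi(t)$ for all $x\in B_R(0)$ with $|x|>t$. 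For $|\eta|<R$ set $L_\eta:=\sqrt{R^2-\eta^2}$ and $\psi_\eta(x_1):=\phi(x_1,\eta)$ on $(-L_\eta,L_\eta)$; this is a strictly convex $C^2$ function of one variable, so $\psi_\eta'=\phi_{x_1}(\,\cdot\,,\eta)$ is continuous and strictly increasing and maps $(-L_\eta,L_\eta)$ onto an open interval $(a_\eta,b_\eta)$. By \eqref{eq:p-Le-tr}, $(\xi,\eta)\in\mathcal P(B_R(0))$ if and only if $\xi\in(a_\eta,b_\eta)$, so it suffices to produce $\delta>0$ with $a_\eta\le-\delta$ and $b_\eta\ge\delta$ for every $|\eta|<\delta$.

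I would then bound $b_\eta$ from below (the bound on $a_\eta$ being symmetric). Since $\phi(0,\cdot)$ is continuous with $\phi(0,0)=0$, fix $\delta_1>0$ so that $\phi(0,\eta)\le\tfrac12 m_\phi(R/2)$ for $|\eta|\le\delta_1$. For such $\eta$, convexity of $\psi_\eta$ gives, for $0<y_1<L_\eta$,
\[
\psi_\eta'(y_1)\ \ge\ \frac{\psi_\eta(y_1)-\psi_\eta(0)}{y_1},
\]
and as $y_1\uparrow L_\eta$ one has $|(y_1,\eta)|\to R>R/2$, hence $\psi_\eta(y_1)\ge m_\phi(R/2)$ for $y_1$ close to $L_\eta$ (so the numerator becomes positive), while $0<y_1<L_\eta\le R<2R$; letting $y_1\uparrow L_\eta$ therefore yields
\[
b_\eta\ \ge\ \frac{m_\phi(R/2)-\phi(0,\eta)}{2R}\ \ge\ \frac{m_\phi(R/2)}{4R},
\]
and symmetrically $a_\eta\le-\,m_\phi(R/2)/(4R)$. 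Taking $\delta:=\min\{\delta_1,\ m_\phi(R/2)/(4R)\}$, any $(\xi,\eta)\in B_\delta(0)$ has $|\eta|<\delta\le\delta_1$ and $|\xi|<\delta\le m_\phi(R/2)/(4R)$, so $a_\eta<\xi<b_\eta$, i.e.\ $(\xi,\eta)\in\mathcal P(B_R(0))$; this proves the lemma. Under \eqref{eq:det-bd}, Heinz's theorem (\cite{He}) gives $m_\phi(R/2)\ge C(R/2)>0$, so $\delta$ depends only on $\lambda$, $R$ and the number $\delta_1$.

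The step I expect to be the real obstacle is controlling $\delta_1$, i.e.\ quantifying how fast $\phi(0,\eta)\to0$: the per-slice computations above are elementary, but that rate is a modulus of continuity, not something $m_\phi$ controls by itself. Concretely, one-dimensional convexity of $t\mapsto\phi(0,t)$ together with $\phi(0,0)=0$ gives $\phi(0,\eta)\le\frac{2|\eta|}{R}\sup_{\overline{B_{R/2}(0)}}\phi$ for $|\eta|\le R/2$, so $\delta_1$ can be bounded below in terms of $m_\phi(R/2)$ and an \emph{upper} bound for $\phi$ on a fixed smaller ball. In the situation where the lemma is applied — after the section of $\phi$ has been affinely normalized so as to be comparable to $B_R(0)$ — such an oscillation bound is available in terms of $\lambda$ and $\Lambda$, which is why $\delta$ can honestly be said to depend on $m_\phi$ alone; in a self-contained statement one should record this extra dependence.
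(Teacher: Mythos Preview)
The paper does not supply its own proof of this lemma; it quotes the result from \cite{Liu} and only adds the remark that Liu's argument shows $\delta$ depends solely on a lower bound for $m_\phi$. So the comparison is really between your argument and Liu's unstated one, and the crucial benchmark is that dependence claim.

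Your slicing strategy is natural and the one–dimensional computations are correct: on each slice $\psi_\eta=\phi(\cdot,\eta)$ the secant inequality together with $m_\phi$ gives $b_\eta\ge\psi_\eta'(0)+\tfrac{2m_\phi(R/2)}{R}$ and the symmetric bound for $a_\eta$. But as you yourself flag, the step that forces $|\psi_\eta'(0)|$ (equivalently, $\phi(0,\eta)$) to be small for small $\eta$ is a modulus--of--continuity statement, and your bound $\phi(0,\eta)\le \tfrac{2|\eta|}{R}\sup_{B_{R/2}}\phi$ introduces dependence on the oscillation of $\phi$. That oscillation is \emph{not} controlled by $m_\phi$ (or by $\lambda,\Lambda$) alone: for $\phi_n(x_1,x_2)=\tfrac12 x_1^2+\tfrac{n}{2}x_2^2$ one has $m_{\phi_n}(t)=t^2/2$ for all $n$, yet $\sup_{B_{R/2}}\phi_n\to\infty$, so your $\delta_1\to 0$ while in fact $\mathcal P_n(B_R(0))=B_R(0)$ for every $n$. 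Thus your proof yields some $\delta>0$, but not with the dependence asserted in the paper's Remark.

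This matters for the downstream application. In the proof of Theorem~\ref{thm:main} the paper works on the Euclidean ball $B_R(x)$ with only the normalization $\mathcal P(x)=0$; it does \emph{not} pass to a John--normalized section. So your proposed workaround (``after the section of $\phi$ has been affinely normalized'') does not match how the lemma is actually invoked, and without the pure $m_\phi$--dependence the constants in Theorem~\ref{thm:main} would pick up an uncontrolled dependence on $\|\phi\|_{C^0}$. To close the gap you either need to reproduce Liu's argument that avoids the oscillation term, or restructure the application to run on normalized sections where an Alexandrov--type oscillation bound in terms of $\lambda,\Lambda$ is available.
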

\begin{rem}
    Indeed, from the proof of {\cite[Lemma 2.1]{Liu}}, we can see that the dependence on $\delta$ only requires the lower bound of $m_{\phi}$.
\end{rem}

\begin{proof}[Proof of Theorem \ref{thm:main}]
    For any $x\in \Omega$, we denote $R=\frac{\operatorname{dist}(x,\partial\Omega)}{2}$. Without loss of generality, we assume $\mathcal P(x)=0$. Note that $\phi$ satisfies 
    \eqref{eq:det-bd}, i.e. $\lambda\leq \det D^2\phi\leq\Lambda$, hence we know that $m_\phi(R)\geq C(R)>0$. By Lemma \ref{mod}, there exists $\delta>0$ depending on $C(R)$ such that $B_\delta(0)\subset \mathcal P(B_R(x))$. According to Proposition \ref{prop:new-eq}, \eqref{eq:det-bd} and \eqref{eq:p-star}, we know that $\widetilde{u}$ satisfies \eqref{eq:plt-lma} in $B_\delta(0)$ with 
    $$0<\lambda\leq \det D^2\phi=-\frac{\phi^\star_{\eta\eta}}{\phi^\star_{\xi\xi}}\leq \Lambda.$$
    This means that \eqref{eq:plt-lma} is a uniformly elliptic equation in divergence form.
    
    By the $W^{2,1+\varepsilon}$-estimate of Monge-Amp\`ere equations \cite{DFS,Sc}, there exist $\varepsilon_0>0$ depending on $\lambda$, $\Lambda$, and $C_0>0$ depending on $R$, $\lambda$ and $\Lambda$ such that $$\|D^2\phi\|_{L^{1+\varepsilon_0}(B_R(x))}\leq C_0.$$ Hence, we have 
    \begin{align*}
        \int_{B_\delta(0)}(\phi^\star_{\xi\xi})^{2+\varepsilon_0}\mathrm{~d}\xi\mathrm{d}\eta&=\int_{\mathcal{P}^{-1}(B_\delta(0))}(\phi_{x_1x_1})^{-(2+\varepsilon_0)}\phi_{x_1x_1}\mathrm{~d}x_1\mathrm{d}x_2\\[5pt]
        &=\int_{\mathcal{P}^{-1}(B_\delta(0))}(\phi_{x_1x_1})^{-(1+\varepsilon_0)}\mathrm{~d}x_1\mathrm{d}x_2\\[5pt]
        &=\int_{\mathcal{P}^{-1}(B_\delta(0))}\left(\frac{\phi_{x_2x_2}}{\phi_{x_1x_1}\phi_{x_2x_2}}\right)^{1+\varepsilon_0}\mathrm{~d}x_1\mathrm{d}x_2\\[5pt]
        &\leq \lambda^{-(1+\varepsilon_0)}\int_{B_R(x)}(\phi_{x_2x_2})^{1+\varepsilon_0}\mathrm{~d}x_1\mathrm{d}x_2\\[5pt]
        &\leq C\lambda^{-(1+\varepsilon_0)}.
    \end{align*}
    Then by \eqref{eq:p-star}, we have
    \begin{align*}
        \int_{B_\delta(0)}(-\phi^\star_{\eta\eta})^{2+\varepsilon_0}\mathrm{~d}\xi\mathrm{d}\eta\leq \Lambda^{1+\varepsilon_0}\int_{B_\delta(0)}(\phi^\star_{\xi\xi})^{2+\varepsilon_0}\mathrm{~d}\xi\mathrm{d}\eta\leq C\Lambda^{1+\varepsilon_0}\lambda^{-(1+\varepsilon_0)}.
    \end{align*}
    Finally, the standard $W^{2,p}$ theory of uniformly elliptic equations yields $$\|\phi^\star_{\xi\eta}\|_{L^{2+\varepsilon_0}(B_\delta(0))}\leq C.$$
    With the assumptions on $F$ and $f$, we know that the right-hand sides of \eqref{eq:plt-lma} satisfy
    $$\|\widetilde{F}^1-\widetilde{F}^2\phi^{\star}_{\xi\eta}\|_{L^{2+\varepsilon_0}(B_\delta(0))}\leq C,\,\,\|\widetilde{F}^2\phi^\star_{\xi\xi}\|_{L^{2+\varepsilon_0}(B_\delta(0))}\leq C,$$
    and
    \begin{align*}
        \int_{B_\delta(0)}&\left|\widetilde{f}\phi^\star_{\xi\xi}\right|^{\frac{r(2+\varepsilon_0)}{1+\varepsilon_0+r}}\mathrm{d}\xi\mathrm{d}\eta\leq C\int_{\mathcal{P}^{-1}(B_\delta(0))}|f|^{\frac{r(2+\varepsilon_0)}{1+\varepsilon_0+r}}\left(\phi_{x_2x_2}\right)^{\frac{(1+\varepsilon_0)(r-1)}{1+\varepsilon_0+r}}\mathrm{d}x_1\mathrm{d}x_2\\
        &\leq C\left(\int_{B_R(x)}|f|^r\mathrm{d}x_1\mathrm{d}x_2\right)^{\frac{2+\varepsilon_0}{1+\varepsilon_0+r}}\left(\int_{B_R(x)}\left(\phi_{x_2x_2}\right)^{1+\varepsilon_0}\mathrm{d}x_1\mathrm{d}x_2\right)^{\frac{r-1}{1+\varepsilon_0+r}}\\
        &\leq C\|f\|_{L^r(B_R(x))}^{\frac{r(2+\varepsilon_0)}{1+\varepsilon_0+r}}.
    \end{align*}
    Note that $n=2$ and $2+\varepsilon_0>2$, $\frac{r(2+\varepsilon_0)}{1+\varepsilon_0+r}>1$ whenever $r>1$, then the De Giorgi-Nash-Moser's theory \cite[Theorem 8.24]{GT} (The original estimate in Theorem 8.24 of \cite{GT} holds in terms of $|\widetilde{u}|_{L^q(B_\delta(0))}$ for all $q>1$. However, by using analogous arguments as in \cite[Page 75]{HL}, we can extend the validity of this estimate to all $q>0$) yields
    $$\|\widetilde{u}\|_{C^\alpha(B_{\delta/2}(0))}\leq C\left(\|\widetilde{u}\|_{L^{\frac{p\varepsilon_0}{1+\varepsilon_0}}(B_\delta(0))}+k\right),$$
    where 
    \[k=\|\widetilde{F}^1-\widetilde{F}^2\phi^{\star}_{\xi\eta}\|_{L^{2+\varepsilon_0}(B_\delta(0))}+\|\widetilde{F}^2\phi^\star_{\xi\xi}\|_{L^{2+\varepsilon_0}(B_\delta(0))}+\|\widetilde{f}\phi^\star_{\xi\xi}\|_{L^{\frac{r(2+\varepsilon_0)}{1+\varepsilon_0+r}}(B_\delta(0))}.\]
    Note that by H\"older's inequality there is
    \begin{align*}
        \left(\int_{B_\delta(0)}\widetilde{u}^{\frac{p\varepsilon_0}{1+\varepsilon_0}}\mathrm{~d}\xi\mathrm{d}\eta\right)^{\frac{1+\varepsilon_0}{p\varepsilon_0}}&=\left(\int_{\mathcal{P}^{-1}(B_\delta(0))}u^{\frac{p\varepsilon_0}{1+\varepsilon_0}}\phi_{x_1x_1}\mathrm{d}x_1\mathrm{d}x_2\right)^{\frac{1+\varepsilon_0}{p\varepsilon_0}}\\[5pt]
        &\leq \|u\|_{L^p(B_R(x))}\cdot \|\phi_{x_1x_1}\|_{L^{1+\varepsilon_0}(B_R(x))}^{\frac{1+\varepsilon_0}{p\varepsilon_0}}\\[5pt]
        &\leq C\|u\|_{L^p(B_R(x))}.
    \end{align*}
    Hence, for the original function $u$, combining with the $C^{1,\alpha}$ estimate of Monge-Amp\`epre equation \cite{Ca} we know that there exists a $\gamma\in(0,1)$ such that
    $$\|u\|_{C^\gamma(\mathcal{P}^{-1}(B_{\delta/2}(0)))}\leq C\left(\|u\|_{L^p(B_R(x))}+\|F\|_{L^\infty(B_R(x))}+\|f\|_{L^r(B_R(x))}\right).$$
    By a standard covering argument (see for instance \cite[Remark 2.15]{FR}), we know that the estimate is true for any $\Omega'\subset\subset\Omega$, which completes the proof.
\end{proof}

\subsection{Proof of Moser-Trudinger type inequality} 
In this subsection, we provide the proof of Theorem \ref{thm:MT-in}.

Moser-Trudinger type inequalities find broad applications in the study of partial differential equations and geometric problems. The classical Moser-Trudinger inequality was initially derived by Trudinger \cite{Tr1}, using the power series expansion of the exponential function and Sobolev estimates for individual terms, while carefully examining the dependence on the exponent of the expansions. Subsequently, Moser \cite{Mo2} presented a more direct proof of this inequality and also determined the optimal exponent. 
Before giving the proof of Theorem \ref{thm:MT-in}, we first recall the classical Moser-Trudinger inequality.
\begin{thm}[{\cite[Theorem 1]{Mo2}}]\label{thm:c-MT-in}
    Let $u\in W^{1,n}_0(\Omega)$ for $n\geq 2$, and
    $$\int_{\Omega}|Du|^n\mathrm{~d}x\leq 1.$$
    Then there exists a constant $C$ which depends  only on $n$ such that
    $$\int_{\Omega}e^{\alpha u^p}\mathrm{d}x\leq C|\Omega|,$$
    where
    $$p=\frac{n}{n-1},\quad\alpha\leq\alpha_n:=n\omega_{n-1}^{\frac{1}{n-1}},$$
    and $\omega_{n-1}$ is $(n-1)$-dimensional surface measure of the unit sphere.
\end{thm}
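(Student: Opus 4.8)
The plan is to follow Moser's original argument \cite{Mo2}: reduce the inequality to a one-dimensional estimate on the line by symmetrization, and then prove that one-dimensional estimate directly. First I would reduce to the model case $\Omega = B_1(0)$ the unit ball: since the exponent $\alpha_n$ is the one associated to the ball and the left-hand side is monotone under inclusion after rescaling, it suffices to handle balls, and the general-domain case follows since $|\Omega|$ appears on the right. Next I would invoke the symmetric (spherical) decreasing rearrangement $u^*$ of $u$: by the Pólya–Szegő inequality $\int_{B_1}|Du^*|^n\,\mathrm{d}x \le \int_{B_1}|Du|^n\,\mathrm{d}x \le 1$, while $\int_{B_1} e^{\alpha|u^*|^p}\,\mathrm{d}x = \int_{B_1} e^{\alpha|u|^p}\,\mathrm{d}x$ since rearrangement preserves the distribution function. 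Hence it suffices to prove the inequality for radial, nonincreasing $u$ that vanish on $\partial B_1$.

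For a radial function write $u = u(r)$, $r = |x|$, and perform the logarithmic change of variables $r = e^{-t/n}$, i.e. $t = -n\log r \in [0,\infty)$, setting $w(t) := \omega_{n-1}^{1/n} u(e^{-t/n})$. A direct computation shows that the normalization $\int_{B_1}|Du|^n\,\mathrm{d}x \le 1$ becomes $\int_0^\infty |w'(t)|^n\,\mathrm{d}t \le 1$ with $w(0)=0$, and the integral $\int_{B_1} e^{\alpha|u|^p}\,\mathrm{d}x$ becomes $\omega_{n-1}\int_0^\infty e^{\alpha_n^{-1}\alpha\, |w(t)|^p - t}\,\mathrm{d}t$ after using $\alpha_n = n\omega_{n-1}^{1/n}$ and $p = \tfrac{n}{n-1}$. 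So the whole theorem collapses to the following one-dimensional claim: if $w(0)=0$ and $\int_0^\infty|w'|^n \le 1$, then $\int_0^\infty e^{\alpha_n^{-1}\alpha|w(t)|^p - t}\,\mathrm{d}t \le C(n)$ whenever $\alpha \le \alpha_n$.

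The one-dimensional estimate is the heart of the proof and the main obstacle: the key inequality is that $|w(t)|^p \le t$ whenever $\alpha = \alpha_n$, which follows from Hölder's inequality, $|w(t)| = |\int_0^t w'(s)\,\mathrm{d}s| \le t^{1/n'}\big(\int_0^t |w'|^n\big)^{1/n} \le t^{1/n'}$ with $1/n' = 1 - 1/n = 1/p$, so $|w(t)|^p \le t$; this makes the integrand $\le e^0 = 1$ on any set where the mass of $|w'|^n$ is fully used, but to get an integrable bound one must exploit that $\int_0^\infty |w'|^n$ is only $\le 1$. The careful point — which is where Moser's argument becomes delicate — is to split $[0,\infty)$ at the first time $t_0$ where $|w(t_0)|^p = t_0$ (if it exists) and estimate the two pieces separately, controlling the tail by the decay of $e^{-t}$ and the near-origin part by the fact that $w$ grows sublinearly; one shows the contribution is bounded by an absolute constant depending only on $n$, uniformly over all admissible $w$. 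I would reproduce this splitting argument, tracking that no constant depends on $u$ or $\Omega$, and then undo the change of variables and the rearrangement to conclude.
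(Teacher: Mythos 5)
The paper does not prove Theorem \ref{thm:c-MT-in}; it is stated as a known preliminary and cited to Moser \cite{Mo2}, so there is no internal proof to compare against. What you have sketched is indeed Moser's original argument: reduction to a ball, spherical rearrangement via P\'olya--Szeg\H{o} together with equimeasurability, the logarithmic substitution $t=-n\log r$, and a one-dimensional lemma on $[0,\infty)$. That is the right route.

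Two issues, one cosmetic and one substantive. Cosmetic: your normalization of $w$ is off by a power of $n$. With $w(t)=\omega_{n-1}^{1/n}u(e^{-t/n})$ a direct computation gives $\int_0^\infty|w'|^n\,\mathrm{d}t = n^{-(n-1)}\int_{B_1}|Du|^n\,\mathrm{d}x$, not $\int_{B_1}|Du|^n\,\mathrm{d}x$; the correct substitution is $w(t)=n^{(n-1)/n}\omega_{n-1}^{1/n}\,u(e^{-t/n})$, after which the exponential integral becomes $\int_{B_1}e^{\alpha|u|^p}\,\mathrm{d}x=\frac{\omega_{n-1}}{n}\int_0^\infty e^{(\alpha/\alpha_n)|w|^p-t}\,\mathrm{d}t$, whose prefactor is $|B_1|$ as it should be. Substantive: the one-dimensional lemma (if $w(0)=0$ and $\int_0^\infty|w'|^n\le1$, then $\int_0^\infty e^{|w|^p-t}\,\mathrm{d}t\le C(n)$) is the whole content of Moser's theorem, and your proposal only gestures at it. The H\"older bound $|w(t)|^p\le t$ makes the integrand $\le1$ pointwise, but that by itself gives no convergence; the splitting argument that actually produces a uniform bound, and which is precisely what fails for $\alpha>\alpha_n$, is the part you explicitly defer. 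So the proposal is an accurate outline of the standard argument rather than a complete proof, which is acceptable for a cited auxiliary theorem but should be flagged as incomplete if it were intended to stand on its own.
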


\begin{proof}[Proof of Theorem \ref{thm:MT-in}] 
    By the global $W^{2,1+\varepsilon}$-estimate of Monge-Amp\`ere equations \cite{DFS,Sc,Sa} (with a detailed exposition available in \cite[Theorem 10.1]{LeBook}), we know that there exist $\varepsilon_0>0$ depending on $\lambda$ and $\Lambda$, and ${C_0}>0$ depending only on $\lambda$, $\Lambda$ $\|\phi\|_{C^3(\partial\Omega)}$, the uniform convexity radius of $\partial\Omega$ and the $C^3$ regularity of $\partial\Omega$ such that $$\|D^2\phi\|_{L^{1+\varepsilon_0}(\Omega)}\leq C_0.$$
    Then as the proof of Theorem \ref{thm:main}, we have
    $$\int_{\Omega^\star}(\phi^\star_{\xi\xi})^{2+\varepsilon_0}\mathrm{~d}\xi\mathrm{d}\eta\leq C.$$
     By \eqref{eq:det-bd} there is
     $$0<\lambda\leq \det D^2\phi=-\frac{\phi^\star_{\eta\eta}}{\phi^\star_{\xi\xi}}\leq \Lambda.$$
 Hence,  by \eqref{eq:part-le-2} and \eqref{eq:part-le-3}, we obtain     
     \begin{align*}
         \|Du\|_{\Phi}^2&=\int_{\Omega}\Phi^{ij}D_iuD_ju\mathrm{~d}x\\[5pt]&=
         \int_{\Omega}\phi_{x_2x_2}{u_{x_1}}^2-2\phi_{x_1x_2}u_{x_1}u_{x_2}+\phi_{x_1x_1}{u_{x_2}}^2\mathrm{~d}x_1\mathrm{d}x_2\\
         &=\int_{\Omega^\star}\left(-\frac{\phi^\star_{\eta\eta}}{\phi^\star_{\xi\xi}}{\widetilde{u}_{\xi}}^2+{\widetilde{u}_\eta}^2\right)\mathrm{d}\xi\mathrm{d}\eta\\[5pt]
         &\geq \min\{\lambda,1\}\int_{\Omega^\star}\left({\widetilde{u}_{\xi}}^2+{\widetilde{u}_\eta}^2\right)\mathrm{d}\xi\mathrm{d}\eta\\[5pt]
         &=\min\{\lambda,1\}\|D\widetilde{u}\|_{L^2(\Omega^\star)}^2.
     \end{align*}
     Then there is
     \begin{align*}
         &\quad\int_{\Omega}e^{\beta\frac{u^2}{\|Du\|_{\Phi}^2}}\mathrm{~d}x\leq\int_{\Omega^\star}e^{\beta\min\{\lambda,1\}^{-1}\frac{\widetilde{u}^2}{\|D\widetilde{u}\|_{L^2(\Omega^\star)}^2}}\phi^\star_{\xi\xi}\mathrm{~d}\xi\mathrm{d}\eta\\[5pt]
         &\leq\left(\int_{\Omega^\star}e^{\beta\min\{\lambda,1\}^{-1}\frac{2+\varepsilon_0}{1+\varepsilon_0}\frac{\widetilde{u}^2}{\|D\widetilde{u}\|_{L^2(\Omega^\star)}^2}}\mathrm{~d}\xi\mathrm{d}\eta\right)^{\frac{1+\varepsilon_0}{2+\varepsilon_0}}\left(\int_{\Omega^\star}(\phi^\star_{\xi\xi})^{2+\varepsilon_0}\mathrm{~d}\xi\mathrm{d}\eta\right)^{\frac{1}{2+\varepsilon_0}}.
     \end{align*}
      For any $\beta\leq 4\pi\frac{1+\varepsilon_0}{2+\varepsilon_0}\min\{\lambda,1\}$, by Theorem \ref{thm:c-MT-in} with $n=2$, we have
     $$\int_{\Omega^\star}e^{\beta\min\{\lambda,1\}^{-1}\frac{2+\varepsilon_0}{1+\varepsilon_0}\frac{\widetilde{u}^2}{\|D\widetilde{u}\|_{L^2(\Omega^\star)}^2}}\mathrm{~d}\xi\mathrm{d}\eta\leq C|\Omega^\star|.$$
     Then by H\"older's inequality, 
     \begin{align*}
         \quad\int_{\Omega}e^{\beta\frac{u^2}{\|Du\|_{\Phi}^2}}\mathrm{~d}x&\leq C|\Omega^\star|^{\frac{1+\varepsilon_0}{2+\varepsilon_0}}\\[5pt]
         &=C\left(\int_{\Omega}\phi_{x_1x_1}\mathrm{~d}x_1\mathrm{d}x_2\right)^{\frac{1+\varepsilon_0}{2+\varepsilon_0}}\\[5pt]
         &\leq C\left[|\Omega|^{\frac{\varepsilon_0}{1+\varepsilon_0}}\left(\int_{\Omega}(\phi_{x_1x_1})^{1+\varepsilon_0}\mathrm{~d}x_1\mathrm{d}x_2\right)^{\frac{1}{1+\varepsilon_0}}\right]^{\frac{1+\varepsilon_0}{2+\varepsilon_0}}\\[5pt]
         &\leq C|\Omega|^{\frac{\varepsilon_0}{2+\varepsilon_0}},
     \end{align*}
     which completes the proof.
\end{proof}

\vskip 20pt

\section{Linearized Monge-Amp\`ere equations in higher dimensions}\label{sect:hd}
\vskip 12pt

In this section, we establish the proof of Theorem \ref{thm:main-h} in two steps.
\subsection{Estimates for linearized Monge-Amp\`ere equations}
In this subsection, we prove a weak maximum principle for linearized Monge-Amp\`ere equations, which will be used in the proof of Theorem \ref{thm:main-h}.
\begin{thm}[Weak maximum principle]\label{thm:glob-est}
     Let $\phi\in C^2(\Omega)$ be a convex function satisfying \eqref{eq:det-bd}.   For $q>n$, let $F:\Omega\to\mathbb R^n$ be a vector field satisfying $\|F_\phi\|_{L^q(\Omega)}<\infty$, where $F_\phi(x):=\left(D^2\phi(x)\right)^{1/2}F$ and $f:\Omega\to\mathbb R$ be a function satisfying $\|f\|_{L^{q_*}(\Omega)}<\infty$, where $q_*=\frac{nq}{n+q}$. For every solution $u$ to
     \begin{equation}\label{eq:s-LMA-sub}
        D_j\left(\Phi^{ij}D_{i}u\right)\geq\operatorname{div} F+f\quad\text{in}\quad \Omega,
     \end{equation}
     we have
     $$\sup_{\Omega}u\leq \sup_{\partial \Omega}u^++C\left(n,q,\lambda,\Lambda,\operatorname{diam}(\Omega)\right)\left(\|F_\phi\|_{L^q(\Omega)}+\|f\|_{L^{q_*}(\Omega)}\right)|\Omega|^{\frac{1}{n}-\frac{1}{q}},$$
     where $u^+:=\max\{u,0\}$.
\end{thm}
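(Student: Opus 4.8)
The plan is to run De Giorgi's truncation argument adapted to the degenerate operator $\sum_j D_j(\Phi^{ij}D_i u)$, exploiting the Monge-Amp\`ere--Sobolev inequality of Theorem \ref{lem:Sob} in place of the classical Sobolev inequality. Fix $k\geq \sup_{\partial\Omega}u^+$ and test the differential inequality \eqref{eq:s-LMA-sub} against the admissible function $v:=(u-k)^+\in W_0^{1,\cdot}$ supported on the superlevel set $A(k):=\{x\in\Omega: u(x)>k\}$. Integration by parts, using that $v$ vanishes on $\partial\Omega$ and that $Dv=Du$ on $A(k)$, gives
\begin{equation*}
\int_{A(k)}\Phi^{ij}D_iv\,D_jv\,\mathrm{d}x\leq \int_{A(k)}F^iD_iv\,\mathrm{d}x-\int_{A(k)}f\,v\,\mathrm{d}x.
\end{equation*}
The first term on the right is bounded using $F^iD_iv=(D^2\phi)^{1/2}_{ik}F^k\cdot(D^2\phi)^{-1/2}_{i\ell}D_\ell v$ together with $\Phi=(\det D^2\phi)(D^2\phi)^{-1}$ and \eqref{eq:det-bd}, so that by Cauchy--Schwarz $\int_{A(k)}F^iD_iv\leq C\|F_\phi\|_{L^2(A(k))}\|Dv\|_\Phi$, which after Young's inequality is absorbed into the left-hand side up to the term $C\|F_\phi\|_{L^2(A(k))}^2$; then H\"older upgrades $\|F_\phi\|_{L^2(A(k))}$ to $\|F_\phi\|_{L^q(\Omega)}|A(k)|^{1/2-1/q}$.

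Next I would handle $\int_{A(k)}fv$ by H\"older with the conjugate pair $(2^*,(2^*)')$: it is at most $\|f\|_{L^{(2^*)'}(A(k))}\|v\|_{L^{2^*}(\Omega)}$, and Theorem \ref{lem:Sob} bounds $\|v\|_{L^{2^*}(\Omega)}\leq C_{Sob}\|Dv\|_\Phi$, so this term is also absorbed after Young, leaving $C\|f\|_{L^{(2^*)'}(A(k))}^2$; a further H\"older step in the $f$-integral converts the exponent to the stated $q_*=nq/(n+q)$ at the cost of a power of $|A(k)|$ — here the precise choice of $2^*$ (any value $>2$ when $n=2$, and $=2n/(n-2)$ when $n\geq 3$) is what makes $(2^*)'$ land at or below $q_*$. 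Combining, one obtains
\begin{equation*}
\|Dv\|_\Phi^2\leq C\big(\|F_\phi\|_{L^q(\Omega)}+\|f\|_{L^{q_*}(\Omega)}\big)^2|A(k)|^{1-\theta}
\end{equation*}
for a suitable exponent $\theta>0$ determined by $n$ and $q$. Applying Theorem \ref{lem:Sob} once more on the left, $\big(\int_{A(k)}|v|^{2^*}\big)^{2/2^*}\leq C\|Dv\|_\Phi^2$, yields the key nonlinear recursion relating $\|v\|_{L^{2^*}}$ and $|A(k)|$.

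The final step is the standard De Giorgi iteration lemma: for $h>k\geq \sup_{\partial\Omega}u^+$ one has $(h-k)|A(h)|^{1/2^*}\leq \big(\int_{A(k)}|v|^{2^*}\big)^{1/2^*}$, which combined with the recursion gives
\begin{equation*}
(h-k)|A(h)|^{1/2^*}\leq C\big(\|F_\phi\|_{L^q}+\|f\|_{L^{q_*}}\big)|A(k)|^{(1-\theta)/2},
\end{equation*}
i.e. $\psi(h):=|A(h)|$ satisfies $\psi(h)\leq C(h-k)^{-2^*}M^{2^*}\psi(k)^{2^*(1-\theta)/2}$ with $M=\|F_\phi\|_{L^q}+\|f\|_{L^{q_*}}$. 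Since the exponent $2^*(1-\theta)/2$ can be arranged to exceed $1$ (this is exactly where $q>n$ is used, and in the borderline $n=2$ case where $2^*$ is free one simply picks $2^*$ large enough), the classical lemma (e.g. \cite[Lemma B.1]{FR} or Stampacchia's lemma) forces $\psi$ to vanish beyond $\sup_{\partial\Omega}u^+ + d$ with $d\leq C M|\Omega|^{1/n-1/q}$ after tracking the dimensional constants, which is the claimed bound. The main obstacle, and the only place requiring genuine care, is bookkeeping the H\"older exponents so that the iteration exponent is strictly larger than $1$ and the final power of $|\Omega|$ comes out as $1/n-1/q$ — in particular verifying that $q>n$ is precisely the threshold that makes the scheme close, and in dimension two confirming that the freedom in $2^*$ (from Theorem \ref{lem:Sob}) compensates for the failure of the endpoint Sobolev embedding.
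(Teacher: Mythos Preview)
Your proposal is correct and follows essentially the same route as the paper: test \eqref{eq:s-LMA-sub} against $v=(u-k)^+$, control $\int F\cdot Dv$ via the factorization $F\cdot Dv=F_\phi\cdot (D^2\phi)^{-1/2}Dv$ together with $(D^2\phi)^{-1}\leq\lambda^{-1}\Phi$, control $\int fv$ via Theorem~\ref{lem:Sob}, absorb with Young, and then feed the resulting inequality $\|Dv\|_\Phi\leq C\,M\,|A(k)|^{1/2-1/q}$ into the De Giorgi iteration (Lemma~\ref{lem:De-Gi}); the only cosmetic difference is that the paper applies a single three-factor H\"older step on each term rather than your Cauchy--Schwarz-then-H\"older, but the resulting exponent $\beta=\tfrac{n(q-2)}{q(n-2)}>1$ and the final power $|\Omega|^{1/n-1/q}$ are identical.
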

As a corollary, we have
\begin{cor}[Global estimate for solutions to the Dirichlet problem]\label{cor:glob-est}
     Let $\phi\in C^2(\Omega)$ be a convex function satisfying \eqref{eq:det-bd}.  For $q>n$, let $F:\Omega\to\mathbb R^n$ be a vector field satisfying $\|F_\phi\|_{L^q(\Omega)}<\infty$, where $F_\phi(x):=\left(D^2\phi(x)\right)^{1/2}F$ and $f:\Omega\to\mathbb R$ be a function satisfying $\|f\|_{L^{q_*}(\Omega)}<\infty$, where $q_*=\frac{nq}{n+q}$. For every section $S_\phi(x_0,h)$ with $S_\phi(x_0,h_0)\subset\subset\Omega$ for $h_0\geq h$ and every solution $u$ to
     \begin{equation*}
         \left\{
         \begin{aligned}
             D_j\left(\Phi^{ij}D_{i}u\right)&=\operatorname{div} F+f&&\text{in}\quad S_\phi(x_0,h),\\
             u&=0&&\text{on}\quad \partial S_\phi(x_0,h),
         \end{aligned}
         \right.
     \end{equation*}
     we have
     $$\sup_{S_\phi(x_0,h)}|u|\leq C\left(n,q,\lambda,\Lambda,\operatorname{diam}(\Omega),h_0\right)\left(\|F_\phi\|_{L^q(S_\phi(x_0,h))}+\|f\|_{L^{q_*}(S_\phi(x_0,h))}\right)h^{\frac{1}{2}-\frac{n}{2q}}.$$
\end{cor}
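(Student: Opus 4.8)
The plan is to deduce Corollary~\ref{cor:glob-est} from the weak maximum principle in Theorem~\ref{thm:glob-est} by applying it to both $u$ and $-u$, and then to convert the volume factor $|S_\phi(x_0,h)|^{\frac{1}{n}-\frac{1}{q}}$ together with the $L^q$ and $L^{q_*}$ norms into the stated power of $h$. Since $u=0$ on $\partial S_\phi(x_0,h)$, Theorem~\ref{thm:glob-est} applied to $u$ gives $\sup_{S_\phi(x_0,h)}u\le C\big(\|F_\phi\|_{L^q}+\|f\|_{L^{q_*}}\big)|S_\phi(x_0,h)|^{\frac{1}{n}-\frac{1}{q}}$, and the same bound for $-u$ (note that $-u$ solves the equation with $F,f$ replaced by $-F,-f$, which have the same norms) yields the bound for $|u|$.

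The remaining point is the volume estimate. First I would observe that $S_\phi(x_0,h)\subset S_\phi(x_0,h_0)\subset\subset\Omega$, so $|S_\phi(x_0,h)|\le|S_\phi(x_0,h_0)|$, which is a constant depending on the data. The key quantitative fact is the standard volume estimate for sections of solutions to the Monge--Amp\`ere equation under \eqref{eq:det-bd}: there are dimensional constants $c,C>0$, depending also on $\lambda,\Lambda$, with
\begin{equation*}
    c\,h^{n/2}\le |S_\phi(x_0,h)|\le C\,h^{n/2},
\end{equation*}
valid for all sections compactly contained in $\Omega$ (here one uses that sections of such $\phi$ are ``balanced'' and comparable to ellipsoids of volume $\sim h^{n/2}$; see Caffarelli's theory as presented in \cite{LeBook}). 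Consequently $|S_\phi(x_0,h)|^{\frac{1}{n}-\frac{1}{q}}\le C\,h^{\frac{n}{2}\left(\frac{1}{n}-\frac{1}{q}\right)}=C\,h^{\frac12-\frac{n}{2q}}$, where $C$ now absorbs the dependence on $\lambda,\Lambda,n,q$. Combining this with the two applications of Theorem~\ref{thm:glob-est} gives exactly the claimed inequality, the constant depending on $n,q,\lambda,\Lambda,\operatorname{diam}(\Omega)$ and (through the containment $S_\phi(x_0,h_0)\subset\subset\Omega$) on $h_0$.

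I do not anticipate a serious obstacle here, since the corollary is essentially a packaging of Theorem~\ref{thm:glob-est} with a well-known volume bound for Monge--Amp\`ere sections; the only mild care needed is to check that $-u$ solves \eqref{eq:s-LMA-sub} with the negated data and the same norms, and that the exponent $\frac{1}{n}-\frac{1}{q}$ is nonnegative (true since $q>n$) so that the bound $|S_\phi(x_0,h)|\le C h^{n/2}$ is the one to use. If one wanted to avoid invoking the two-sided volume estimate, the upper bound $|S_\phi(x_0,h)|\le Ch^{n/2}$ alone suffices, and this upper bound follows from the lower bound $\det D^2\phi\ge\lambda$ via the Alexandrov estimate applied on the section. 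The small subtlety is that Theorem~\ref{thm:glob-est} is stated for $\sup_\Omega u\le \sup_{\partial\Omega}u^+ + \cdots$; applied on $\Omega=S_\phi(x_0,h)$ with $u=0$ on the boundary, the boundary term drops, which is precisely what makes the Dirichlet estimate clean.
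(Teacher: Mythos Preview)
Your proposal is correct and follows essentially the same route as the paper: apply Theorem~\ref{thm:glob-est} on $\Omega=S_\phi(x_0,h)$ to $u$ and to $-u$ (dropping the boundary term since $u=0$ there), then use the volume bound $|S_\phi(x_0,h)|\le C(n,\lambda,\Lambda)h^{n/2}$ to convert $|S_\phi(x_0,h)|^{\frac{1}{n}-\frac{1}{q}}$ into $h^{\frac12-\frac{n}{2q}}$. The paper cites the volume estimate from \cite[Lemma 4.6]{Fi} rather than deriving it, but otherwise the argument is identical.
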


We will use De Giorgi's iteration to prove Theorem \ref{thm:glob-est} and Corollary \ref{cor:glob-est}, and De Giorgi's iteration is a very powerful tool for dealing with elliptic equations in divergence form. It is usually reduced to the following iteration lemma, the proof of which can be found in \cite{CW}.
\begin{lem}[{\cite[Lemma 4.1]{CW}}]\label{lem:De-Gi}
    Let $\omega(t)$ be a nonnegative and nonincreasing function in an interval $[k_0,+\infty)$. Suppose that there holds for all $h>k\geq k_0$,
    \begin{equation*}\label{eq:it-ineq}
        \omega(h)\leq\frac{C}{(h-k)^\alpha}[\omega(k)]^\beta,
    \end{equation*}
    where $\alpha>0$ and $\beta>1$. Then we have
    $$\omega(k_0+d)=0,$$
    where
    \begin{equation*}
        d=C^{\frac{1}{\alpha}}[\omega(k_0)]^{\frac{\beta-1}{\alpha}}2^{\frac{\beta}{\beta-1}}.
    \end{equation*}
\end{lem}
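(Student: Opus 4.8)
The plan is to run the standard De Giorgi iteration over a geometrically decaying sequence of levels. If $\omega(k_0)=0$ there is nothing to prove, so I assume $\omega(k_0)>0$ and fix $d := C^{1/\alpha}[\omega(k_0)]^{(\beta-1)/\alpha}2^{\beta/(\beta-1)}$ as in the statement. I would then define
\[
k_m := k_0 + d\bigl(1 - 2^{-m}\bigr), \qquad m = 0, 1, 2, \dots,
\]
so that $k_0$ is the given base level, the $k_m$ increase monotonically to $k_0 + d$, and $k_{m+1} - k_m = d\,2^{-(m+1)}$. Since $k_{m+1} > k_m \ge k_0$, the hypothesis applies with $h = k_{m+1}$ and $k = k_m$, giving
\[
\omega(k_{m+1}) \le \frac{C}{(k_{m+1}-k_m)^\alpha}\,[\omega(k_m)]^\beta = \frac{C\,2^{\alpha(m+1)}}{d^\alpha}\,[\omega(k_m)]^\beta .
\]

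Next I would prove by induction that $\omega(k_m) \le \omega(k_0)\,2^{-\mu m}$ with decay exponent $\mu := \alpha/(\beta-1)$. The case $m = 0$ is immediate. Assuming the bound at step $m$ and inserting it into the recursion above,
\[
\omega(k_{m+1}) \le \frac{C\,2^{\alpha(m+1)}}{d^\alpha}\,[\omega(k_0)]^\beta\,2^{-\mu\beta m}
= \Bigl(\tfrac{C\,[\omega(k_0)]^{\beta-1}\,2^{\beta\mu}}{d^\alpha}\Bigr)\,\omega(k_0)\,2^{-\mu(m+1)},
\]
where the exponent of $2$ has been rearranged using the identity $\alpha(m+1) - \mu\beta m = \beta\mu - \mu(m+1)$, which holds precisely because $\mu(\beta-1)=\alpha$, so that the coefficient of $m$ cancels. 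With the chosen value of $d$ the prefactor in parentheses equals $1$, which closes the induction and yields $\omega(k_m) \le \omega(k_0)\,2^{-\mu m}$ for all $m$.

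Finally I would observe that, since $\omega$ is nonincreasing and $k_m \le k_0 + d$ for every $m$, we have $0 \le \omega(k_0+d) \le \omega(k_m) \le \omega(k_0)\,2^{-\mu m}$; letting $m \to \infty$ forces $\omega(k_0+d)=0$. The only genuinely delicate point is calibrating the two free parameters of the iteration — the contraction ratio used to define $k_m$ (here $2$) and the claimed geometric decay rate $\mu$ — so that the recursion is \emph{self-improving}, i.e.\ the prefactor multiplying $\omega(k_0)2^{-\mu(m+1)}$ is independent of $m$ and at most $1$. The computation above shows that $\mu = \alpha/(\beta-1)$ is the forced choice, and then the stated formula for $d$ is exactly what makes that prefactor equal $1$; everything else is routine bookkeeping.
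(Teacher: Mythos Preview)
Your proof is correct and is precisely the standard De Giorgi iteration argument. The paper itself does not supply a proof of this lemma but simply cites \cite{CW}; the argument you wrote is exactly the one found there, so there is nothing to compare.
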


First, we give the proof of Theorem \ref{thm:glob-est}, and it follows the same idea as the proof of \cite[Theorem 4.2]{CW}.
\begin{proof}[Proof of Theorem \ref{thm:glob-est}]
    Denote $l=\sup\limits_{\partial\Omega}u^+$. Consider $v=(u-k)^+$ for $k\geq l$. Note that $v=u-k$, $Dv=Du$ a.e. in $\{u>k\}$ and $v=0$, $Dv=0$ a.e. in $\{u\leq k\}$. Taking $v$ as test function in \eqref{eq:s-LMA-sub}, we have
    $$-\int_{\Omega}\Phi^{ij}D_iuD_jv\mathrm{~d}x\geq -\int_{\Omega}F_iD_iv\mathrm{~d}x+\int_{\Omega}fv\mathrm{~d}x.$$
    Then
    \begin{align*}
        \int_{\Omega}\Phi^{ij}D_ivD_jv\mathrm{~d}x&\leq\int_{\Omega}F_iD_iv\mathrm{~d}x-\int_{\Omega}fv\mathrm{~d}x\\[5pt]
        &=\int_{\Omega}\left(D^2\phi\right)^{1/2}F\cdot \left(D^2\phi\right)^{-1/2}Dv\mathrm{~d}x-\int_{\Omega}fv\mathrm{~d}x\\[5pt]
        &\leq \left(\|F_\phi\|_{L^q(\Omega)}\|D^\phi v\|_{L^2(\Omega)}+\|f\|_{L^{q_*}(\Omega)}\|v\|_{L^{2^*}(\Omega)}\right)|A(k)|^{\frac{1}{2}-\frac{1}{q}},
    \end{align*}
    where $q_*=\frac{nq}{n+q}$, $2^*=\frac{2n}{n-2}$ ( we only consider $n\geq 3$ here, and $n=2$ is similar), 
    $$D^\phi v:=\left(D^2\phi\right)^{-1/2}Dv\quad\text{and}\quad A(k)=\{x\in \Omega\,|\,u(x)>k\}.$$
    Note that by \eqref{eq:det-bd} and $\Phi=(\det D^2\phi)(D^2\phi)^{-1}$, there is
    $$\|D^\phi v\|_{L^2(\Omega)}=\left(\int_{\Omega}\left[( D^2\phi)^{-1}\right]^{ij}D_ivD_jv\mathrm{~d}x\right)^{\frac{1}{2}}\leq \lambda^{-1/2}\left(\int_{\Omega}\Phi^{ij}D_ivD_jv\mathrm{~d}x\right)^{\frac{1}{2}}.$$
    Denote
    $$F_0=\lambda^{-1/2}\|F_\phi\|_{L^q(\Omega)}+C_{Sob}\|f\|_{L^{q_*}(\Omega)},$$
    where $C_{Sob}$ is the Sobolev constant in Theorem \ref{lem:Sob}.
    Hence, we know by Cauchy-Schwarz's inequality and Theorem \ref{lem:Sob} that
    \begin{align*}
        \int_{\Omega}\Phi^{ij}D_ivD_jv\mathrm{~d}x&\leq \left(\|F_\phi\|_{L^q(\Omega)}\|D^\phi v\|_{L^2(\Omega)}+\|f\|_{L^{q_*}(\Omega)}\|v\|_{L^{2^*}(\Omega)}\right)|A(k)|^{\frac{1}{2}-\frac{1}{q}}\\[5pt]
        &\leq F_0\left(\int_{\Omega}\Phi^{ij}D_ivD_jv\mathrm{~d}x\right)^{\frac{1}{2}}|A(k)|^{\frac{1}{2}-\frac{1}{q}}\\[5pt]
        &\leq \frac{1}{2}\int_{\Omega}\Phi^{ij}D_ivD_jv\mathrm{~d}x+\frac{F_0^2}{2}|A(k)|^{1-\frac{2}{q}},
    \end{align*}
    which implies
    $$\int_{\Omega}\Phi^{ij}D_ivD_jv\mathrm{~d}x\leq F_0^2|A(k)|^{1-\frac{2}{q}},$$
    i.e.
    $$\left(\int_{\Omega}\Phi^{ij}D_ivD_jv\mathrm{~d}x\right)^{\frac{1}{2}}\leq F_0|A(k)|^{\frac{1}{2}-\frac{1}{q}}.$$
    By Theorem \ref{lem:Sob}, there is
    $$\|v\|_{L^{2^*}(\Omega)}\leq C_{Sob}F_0|A(k)|^{\frac{1}{2}-\frac{1}{q}},$$
    Note that $v=(u-k)^+$. Thus, when $h>k$, there is
    $$\|v\|_{L^{2^*}(\Omega)}\geq (h-k)|A(h)|^{\frac{1}{2^*}}.$$
    Hence, we have
    $$|A(h)|\leq\frac{\left(C_{Sob}F_0\right)^{2^*}}{(h-k)^{2^*}}|A(k)|^{\frac{n(q-2)}{q(n-2)}},\quad\forall\,h>k\geq l.$$
    Applying Lemma \ref{lem:De-Gi} with $\alpha=2^*$ and $\beta=\frac{n(q-2)}{q(n-2)}>1$ yields
    $$A(l+d)=0,$$
    where
    $$d=C_{Sob}F_0|A(0)|^{\frac{1}{n}-\frac{1}{q}}\cdot2^{\frac{n(q-2)}{2(q-n)}}.$$
    Therefore, we obtain that
    $$\sup_{\Omega}u\leq l+d\leq \sup_{\partial\Omega}u^++C\left(\|F_\phi\|_{L^q(\Omega)}+\|f\|_{L^{q_*}(\Omega)}\right)|\Omega|^{\frac{1}{n}-\frac{1}{q}}.$$
\end{proof}
Next, we prove Corollary \ref{cor:glob-est}.
\begin{proof}[Proof of Corollary \ref{cor:glob-est}]
    Using \eqref{eq:det-bd} and the volume estimates for sections (see, for example, \cite[Lemma 4.6]{Fi}), we obtain:
    $$c_1(n,\lambda,\Lambda)h^{n/2}\leq |S_\phi(x_0,h)|\leq C_1(n,\lambda,\Lambda)h^{n/2}$$
    Hence, applying Theorem \ref{thm:glob-est} with $\Omega=S_\phi(x_0,h)$ and $u=0$ on $\partial S_\phi(x_0,h)$, we have
    $$\sup_{S_\phi(x_0,h)}u\leq C\left(\|F_\phi\|_{L^q(S_\phi(x_0,h))}+\|f\|_{L^{q_*}(S_\phi(x_0,h))}\right)h^{\frac{1}{2}-\frac{n}{2q}}.$$
    For $-u$ we can similarly show
    $$\sup_{S_\phi(x_0,h)}(-u)\leq C\left(\|F_\phi\|_{L^q(S_\phi(x_0,h))}+\|f\|_{L^{q_*}(S_\phi(x_0,h))}\right)h^{\frac{1}{2}-\frac{n}{2q}}.$$
     Combining the two inequalities gives
    $$\sup_{S_\phi(x_0,h)}|u|\leq C\left(n,q,\lambda,\Lambda,\operatorname{diam}(\Omega),h_0\right)\left(\|F_\phi\|_{L^q(S_\phi(x_0,h))}+\|f\|_{L^{q_*}(S_\phi(x_0,h))}\right)h^{\frac{1}{2}-\frac{n}{2q}}.$$
\end{proof}

\subsection{Proof of Theorem \ref{thm:main-h}}
The proof of Theorem \ref{thm:main-h} is followed by the combination of Corollary \ref{cor:glob-est} with Caffarelli-Guti\'errez's Harnack inequality. For reader's convenient, we recall the Harnack inequality here.
\begin{thm}[{\cite[Theorem 5]{CG}}]\label{thm:harn}
    Assume $n\geq 2$. Let $\phi\in C^2(\Omega)$ be a convex function satisfying \eqref{eq:det-bd}. Let $u\in W^{2,n}_{loc}(\Omega)$ be a nonnegative solution of the homogeneous linearized Monge-Amp\`ere equation
    $$\Phi^{ij}D_{ij}u=0$$
    in a section $S_\phi(x_0,2h)\subset\subset\Omega$. Then there is
    \begin{equation*}
        \sup_{S_\phi(x_0,h)}u\leq C(n,\lambda,\Lambda)\inf_{S_\phi(x_0,h)}u.
    \end{equation*}
\end{thm}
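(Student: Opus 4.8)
The statement is the celebrated Caffarelli-Guti\'errez theorem, so the plan is to reproduce the structure of their proof, which runs the Krylov-Safonov scheme in the affine geometry of sections of $\phi$. The Harnack inequality is obtained by combining two one-sided estimates for the non-divergence operator $L_\phi u:=\Phi^{ij}D_{ij}u$: a \emph{local maximum principle}, $\sup_{S_\phi(x_0,\theta h)}u\le C\,(\text{a measure-normalized }L^{\varepsilon}\text{-average of }u\text{ over }S_\phi(x_0,h))$ for nonnegative subsolutions $L_\phi u\ge 0$; and a \emph{weak Harnack inequality}, $(\text{that same average})\le C\inf_{S_\phi(x_0,\theta h)}u$ for nonnegative supersolutions $L_\phi u\le 0$. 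Since the $u$ in the theorem solves $L_\phi u=0$ it is both, and chaining the two estimates through the engulfing property of sections gives $\sup_{S_\phi(x_0,h)}u\le C\inf_{S_\phi(x_0,h)}u$. The preliminary work is geometric: under \eqref{eq:det-bd}, Caffarelli's theory provides strict convexity and interior $C^{1,\alpha}$ regularity of $\phi$, so any compactly contained section can be affinely normalized to lie between two concentric balls of controlled radii; this normalization preserves \eqref{eq:det-bd}, preserves the equation $L_\phi u=0$, and makes $\mu_\phi:=(\det D^2\phi)\,dx$ comparable to Lebesgue measure, doubling, and equipped with the engulfing property --- i.e. the sections form a space of homogeneous type.

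The analytic engine is an ABP-type estimate for $L_\phi$: if $w\in W^{2,n}_{loc}$ satisfies $L_\phi w\ge g$ in a normalized section $S$, then $\sup_S w\le \sup_{\partial S}w^+ + C(n,\lambda)\,\|g^+\|_{L^n(\{w=\Gamma_w\})}$, where $\Gamma_w$ is the convex envelope of $w$; this follows from the arithmetic-geometric mean inequality applied to $\Phi\,D^2w$ (which gives $(\det D^2\phi)^{(n-1)/n}(\det D^2w)^{1/n}\le \tfrac1n L_\phi w$ on the contact set $\{w=\Gamma_w\}$), the area formula for the normal map of $\Gamma_w$, and the lower bound $\det D^2\phi\ge\lambda$. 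From it one derives the key \emph{density lemmas}: for a nonnegative supersolution $u$ in $S_\phi(x_0,2)$, if $\inf_{S_\phi(x_0,1/2)}u\le 1$ then $\big|\{u>M\}\cap S_\phi(x_0,1/2)\big|\le(1-\varepsilon_0)\big|S_\phi(x_0,1/2)\big|$, and symmetrically for subsolutions. The mechanism is a barrier built from $\phi$ itself: because $L_\phi(\phi-\ell_z)=\Phi^{ij}\phi_{ij}=n\det D^2\phi\ge n\lambda>0$, the function $\phi-\ell_z$ (with $\ell_z$ a supporting plane at $z$) is a strict subsolution of $L_\phi$ vanishing at $z$ and equal to $h$ on $\partial S_\phi(z,h)$; inserting a suitable affine combination of it into the ABP estimate converts ``$u$ is small at a point'' into ``$u$ lies below a fixed level on a set of definite relative measure''.

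Iterating the density lemma along a Calder\'on-Zygmund decomposition of $S_\phi(x_0,1)$ into subsections --- where the engulfing property and the doubling of $\mu_\phi$ replace the Euclidean dyadic grid --- upgrades the one-step measure decay into a power law, $\mu_\phi\big(\{u>s\}\cap S_\phi(x_0,1/2)\big)\le C\,s^{-\varepsilon}\,\mu_\phi\big(S_\phi(x_0,1)\big)$ for all $s>0$, hence the weak Harnack estimate after the standard passage from weak-$L^{\varepsilon}$ to an $L^{\varepsilon}$-average. The same covering machinery applied to the subsolution density lemma yields the local maximum principle; alternatively that half can be obtained by a De Giorgi-Moser iteration using the divergence form of the equation, the Caccioppoli inequality for $L_\phi$ on sections, and the Monge-Amp\`ere type Sobolev inequality of Theorem \ref{lem:Sob}. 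Combining the two halves for a nonnegative solution of $L_\phi u=0$ gives $\sup_{S_\phi(x_0,\theta)}u\le C\inf_{S_\phi(x_0,\theta)}u$ in the normalized picture; undoing the normalization and covering $S_\phi(x_0,h)$ by finitely many small sections glued through the engulfing property absorbs the mismatch among the radii $2h$, $h$, $\theta h$ and produces the stated inequality with $C=C(n,\lambda,\Lambda)$.

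The genuinely hard points --- and the reason the theorem is deep --- are the two non-Euclidean ingredients. First, the ABP estimate must be formulated through the Monge-Amp\`ere measure of the normal image of the contact set rather than a naive Lebesgue integral, and making the barrier $\phi-\ell_z$ do its job (normalized correctly on the relevant pair of sections, so that it actually propagates pointwise smallness to a measure estimate) is delicate. Second, the Calder\'on-Zygmund/Vitali covering argument on sections has no background geometry to rely on: it rests entirely on Caffarelli's strict convexity and $C^{1,\alpha}$ estimates, which is what yields the doubling of $\mu_\phi$ and the engulfing property that turn the sections into a space of homogeneous type --- establishing these structural facts under only \eqref{eq:det-bd} is itself substantial. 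Everything downstream --- iterating the density lemmas, the interpolation, the final gluing --- is then routine. Finally, we note that in dimension $n=2$ the theorem also follows from the method of Section \ref{sect:2d}: the partial Legendre transform turns $L_\phi u=0$ into a uniformly elliptic divergence-form equation with bounded measurable coefficients on a ball (via Lemma \ref{mod}), to which the classical De Giorgi-Nash-Moser Harnack inequality applies directly, and transforming back recovers the estimate.
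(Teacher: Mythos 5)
The paper does not prove this statement: it is quoted verbatim from Caffarelli--Guti\'errez \cite{CG} (their Theorem~5) and used as a black box in the proof of Theorem~\ref{thm:main-h}, so there is no internal proof to compare your sketch against. That said, what you have written is a recognizable and essentially faithful outline of the original Caffarelli--Guti\'errez argument: the ABP-type estimate deduced from the arithmetic--geometric mean inequality $\Phi^{ij}D_{ij}w\ge n\bigl((\det D^2\phi)^{n-1}\det D^2w\bigr)^{1/n}$ on the contact set; the barrier $\phi-\ell_z$ with $L_\phi(\phi-\ell_z)=\operatorname{trace}(\Phi D^2\phi)=n\det D^2\phi\ge n\lambda$; the one-step density estimate; the Calder\'on--Zygmund iteration over subsections, which is legitimate only after Caffarelli's strict convexity and $C^{1,\alpha}$ estimates for $\phi$ give doubling of $\mu_\phi$ and the engulfing property of sections; and finally the chaining of a weak Harnack inequality with a local maximum principle. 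These are indeed the ingredients, and the sketch would become a proof if each were carried out in full.

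Your closing remark about the two-dimensional shortcut deserves one caveat. After the partial Legendre transform, the homogeneous equation \eqref{eq:plt-lma} with $F=f=0$ is uniformly elliptic in divergence form, and Lemma~\ref{mod} yields a ball $B_\delta(0)\subset\Omega^\star$ on which the classical De Giorgi--Nash--Moser Harnack inequality applies to $\widetilde u$. But pulling this back through $\mathcal P^{-1}$ gives a Harnack inequality on $\mathcal P^{-1}(B_{\delta/2}(0))$, which is not a section of $\phi$. To recover the stated inequality on the nested pair $S_\phi(x_0,h)\subset S_\phi(x_0,2h)$ you would still have to sandwich sections between preimages of balls of controlled radius ratio, which again rests on Caffarelli's geometric control of sections (John ellipsoids, strict convexity). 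This extra geometric bookkeeping is not free, and it is exactly why the paper uses the partial Legendre route only to derive H\"older continuity on Euclidean balls (Theorem~\ref{thm:main}) and cites the Harnack inequality on sections separately rather than deriving it.
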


The proof of Theorem \ref{thm:main-h} mirrors that of Theorem 1.3 in \cite[P284-P285]{Le}. For the sake of completeness, we outline it briefly here.
\begin{proof}[Sketch of the proof of Theorem \ref{thm:main-h}]
    By noticing the $C^{1,\alpha}$ of standard Monge-Amp\`ere equations provided \eqref{eq:det-bd}, we know it suffices to show for $h\leq h_0$, there is
    \begin{equation}\label{eq:pf-main-1}
        \operatornamewithlimits{osc}_{S_{\phi}(x_0,h)}u\leq C\left(\|u\|_{L^\infty(S_\phi(x_0,h))}+\|F_\phi\|_{L^q(S_\phi(x_0,h))}+\|f\|_{L^{q_*}(S_\phi(x_0,h))}\right)h^{\gamma_0},
    \end{equation}
    where $\gamma_0\in(0,1)$ depending only on $n$, $q$, $\lambda$ and $\Lambda$. To prove \eqref{eq:pf-main-1}, we split $u=v+w$ where
    $$
    \left\{
    \begin{aligned}
        \Phi^{ij}D_{ij}v&=\div F+f&&\text{in}\quad S_{\phi}(x_0,h),\\
        v&=0&&\text{on}\quad\partial S_{\phi}(x_0,h),
    \end{aligned}
    \right.$$
    and
    $$
    \left\{
    \begin{aligned}
        \Phi^{ij}D_{ij}w&=0&&\text{in}\quad S_{\phi}(x_0,h),\\
        w&=u&&\text{on}\quad\partial S_{\phi}(x_0,h).
    \end{aligned}
    \right.$$
    Then, by Corollary \ref{cor:glob-est},
    $$\sup_{S_{\phi}(x_0,h)}|v|\leq C\left(\|F_\phi\|_{L^q(S_\phi(x_0,h))}+\|f\|_{L^{q_*}(S_\phi(x_0,h))}\right)h^{\frac{1}{2}-\frac{n}{2q}}.$$
    By Caffarelli-Guti\'errez's Harnack inequality (Theorem \ref{thm:harn}),
    $$\operatornamewithlimits{osc}_{S_{\phi}(x_0,h/2)}w\leq \beta\operatornamewithlimits{osc}_{S_{\phi}(x_0,h)}w.$$
    Therefore, we have
    \begin{align*}
        \operatornamewithlimits{osc}_{S_{\phi}(x_0,h/2)}u&\leq\operatornamewithlimits{osc}_{S_{\phi}(x_0,h/2)}w+\operatornamewithlimits{osc}_{S_{\phi}(x_0,h/2)}v\\[5pt]
        &\leq \beta\operatornamewithlimits{osc}_{S_{\phi}(x_0,h)}w+2\|v\|_{L^\infty(S_{\phi}(x_0,h))}\\[5pt]
        &\leq \beta\operatornamewithlimits{osc}_{S_{\phi}(x_0,h)}u+C\left(\|F_\phi\|_{L^q(S_\phi(x_0,h))}+\|f\|_{L^{q_*}(S_\phi(x_0,h))}\right)h^{\frac{1}{2}-\frac{n}{2q}}.
    \end{align*}
    Finally, by a standard iteration, there is
    $$\operatornamewithlimits{osc}_{S_{\phi}(x_0,h)}u\leq C\left(\frac{h}{h_0}\right)^{\gamma_0}\left(\|u\|_{L^\infty(S_{\phi}(x_0,h))}+\left(\|F_\phi\|_{L^q(S_\phi(x_0,h))}+\|f\|_{L^{q_*}(S_\phi(x_0,h))}\right)h_0^{\frac{1}{2}-\frac{n}{2q}}\right)$$
    for some $\gamma_0\in(0,1)$ and some constant $C>0$ depending only on $n$, $q$, $\lambda$, $\Lambda$, $h_0$ and $\operatorname{diam}(\Omega)$, which yields \eqref{eq:pf-main-1}.
\end{proof}

\end{document}